\newcommand{\A}{\mathbb{A}}
\newcommand{\C}{\mathbb{C}}
\newcommand{\Hp}{\mathbb{H}}
\newcommand{\N}{\mathbb{N}}
\newcommand{\R}{\mathbb{R}}
\newcommand{\Z}{\mathbb{Z}}
\newcommand{\rar}{\rightarrow}
\newcommand{\bsl}{\backslash}
\newcommand{\Dbar}{\overline{D}}
\newcommand{\SL}{\mathrm{SL}}
\newcommand{\PSL}{\mathrm{PSL}}
\newcommand{\rrbr}{\rrbracket}
\newcommand{\llbr}{\llbracket}
\DeclareMathOperator{\Rl}{Re}
\DeclareMathOperator{\Ig}{Im}
\DeclareMathOperator{\per}{per}
\DeclareMathOperator{\Tr}{Tr}
\DeclareMathOperator{\sign}{sign}
\DeclareMathOperator{\spec}{spec}
\newtheorem{theorem}{Theorem}[section]
\newtheorem{corollary}[theorem]{Corollary}
\newtheorem{proposition}[theorem]{Proposition}
\newtheorem{lemma}[theorem]{Lemma}
\theoremstyle{definition}
\newtheorem{remark}[theorem]{Remark}
\newtheorem*{acknowledgements}{Acknowledgements}
\numberwithin{equation}{section}
\title{Distribution of the periodic points of the Farey map}
\author{Byron Heersink}
\address{Department of Mathematics, The Ohio State University, Columbus, OH 43210}
\email{heersink.5@osu.edu}
\dedicatory{With an Appendix by Florin P. Boca, Byron Heersink, and Claire Merriman}
\begin{document}

\begin{abstract}
We expand the cross section of the geodesic flow in the tangent bundle of the modular surface given by Series to produce another section whose return map under the geodesic flow is a double cover of the natural extension of the Farey map. We use this cross section to extend the correspondence between the closed geodesics on the modular surface and the periodic points of the Gauss map to include the periodic points of the Farey map. Then, analogous to work of Pollicott, we prove an equidistribution result for the periodic points of the Farey map when they are ordered according to the length of their corresponding closed geodesics.
\end{abstract}

\maketitle

\section{Introduction}

The work of Series \cite{S} provided a lucid explanation of the connection, first noticed by Artin \cite{Art}, between continued fractions and the geodesics in the modular surface. This relationship gives rise particularly to a correspondence between the periodic orbits of the Gauss map and the primitive closed geodesics. Utilizing this connection and the thermodynamic formalism for the Gauss map due to Mayer \cite{M}, Pollicott \cite{P} proved that the periodic points of the Gauss map, i.e., the periodic continued fractions and equivalently the reduced quadratic irrationals, become equidistributed on the unit interval with respect to the Gauss measure when ordered according to the length of their corresponding closed geodesics. 

The goal of this paper is expand the work of Series and Pollicott to encompass the Farey map, which is a slow down of the Gauss map. We first enlarge Series' cross section of the geodesic flow in the tangent bundle to the modular surface that, together with its first return map under the geodesic flow, forms a double cover of the natural extension of the Gauss map, to yield a cross section forming a double cover of the natural extension of the Farey map. This allows us to naturally extend the correspondence between closed geodesics and reduced quadratic irrationals to include the periodic points of the Farey map. We then utilize the work of Pollicott to establish the equidistribution of the periodic points of the Farey map, and of its natural extension, according to their invariant measures.

In addition to the equidistribution of the reduced quadratic irrationals, Pollicott \cite{P} proved the equidistribution of the closed geodesics in the modular surface. The technique he used was to establish the domain of an appropriate dynamical zeta function constructed from determinants of the Ruelle-Perron-Frobenius operator of the Gauss map. Similar applications of this idea include the earlier work of Parry and Pollicott \cite{PP} and Parry \cite{Pa} on the counting and equidistribution of orbits of Axiom A flows, and the more recent work of Kelmer \cite{Ke} on the equidistribution of closed geodesics in the modular surface with a specified linking number, as well as their corresponding reduced quadratic irrationals. Additional applications involving the distribution of periodic points of certain expanding maps of the unit interval include \cite{KN,Mo}. See also \cite{L} for an account of these types of results involving more probabilistic methods.

What is interesting about our results is that the Farey map and its natural extension have infinite invariant measures, which raises some difficulties in proving the equidistribution of periodic points. We mitigate these difficulties by introducing appropriate weights for the points which counteract the infinite measures, and formulate our results as  the equidistribution of weighted periodic points according to the finite Lebesgue measure. This, in addition to the use of inducing to obtain the Gauss map from the Farey map, then allows us to use Pollicott's analysis of the Ruelle-Perron-Frobenius operator of the Gauss map to obtain our results. A related equidistribution result for the periodic points of the Farey map has been independently derived by Pollicott and Urba\'nski as an application of their recent work in conformal graph directed Markov systems \cite{PU}. Their general results can also be applied to other parabolic systems, most notably to counting problems in Apollonian circle packings (see \cite{KO,OS1,OS3,OS2}).

In Section \ref{Sec2}, we cover some of the basic properties of continued fractions. We also define the Gauss and Farey maps and their natural extensions, characterize their periodic points, and formulate our main equidistribution results. In Section \ref{Sec3}, we review the connection between the modular surface and continued fractions. In particular, we recall Series' cross section of the geodesic flow, which we enlarge to yield another section whose return map under the geodesic flow is a double cover of the Farey map's natural extension. We then use this new section to extend the correspondence between closed geodesics in the modular surface and the periodic points of the Gauss map to those of the Farey map. In Section \ref{Sec4}, we prove our main equidistribution result utilizing the relationship between the periodic points of the Farey and Gauss maps to essentially reduce the problem to proving the equidistribution of the Gauss periodic points over certain continuous functions on $(0,1]$ which are allowed to have a vertical asymptote at $0$. We thus adapt Pollicott's work on the Ruelle-Perron-Frobenius operator of the Gauss map, being careful to account for a possible asymptote in a function used to define the operator. The aforementioned results have appeared in the author's Ph.D.\ thesis \cite{H}. The appendix, which is joint with Florin P.\ Boca and Claire Merriman, additionally proves a variation of the equidistribution results with explicit error terms using the ideas of Kallies et al.\ \cite{KOPS}, Boca \cite{Bo}, and Ustinov \cite{Ust}.

\section{Continued fractions and the Gauss and Farey maps}\label{Sec2}

\subsection{Continued fractions}\label{ssec:cfracs}
Throughout this paper, we are concerned with the regular continued fractions of the form
\[[a_1,a_2,\ldots]:=\cfrac{1}{a_1+\cfrac{1}{a_2+\ddots}},\qquad(a_j\in\N)\]
and we also make use of the notation
\[[a_0;a_1,a_2,\ldots]:=a_0+[a_1,a_2,\ldots].\qquad(a_0\in\Z,a_j\in\N)\]

For a given sequence $a=(a_j)_{j=1}^\infty$ of positive integers, define the nonnegative, coprime integers $p_n=p_n(a)=p_n(a_1,\ldots,a_n)$, $q_n=q_n(a)=q_n(a_1,\ldots,a_n)$ by
\[\frac{p_n}{q_n}:=[a_1,a_2,\ldots,a_n].\]
Denoting also $p_0=p_0(a):=0$ and $q_0=q_0(a):=1$, one can find that for $n\geq2$, $p_n=a_np_{n-1}+p_{n-2}$ and $q_n=a_nq_{n-1}+q_{n-2}$, which imply
\begin{equation}\label{CFmatrices}
\left(\begin{array}{cc}a_n&1\\1&0\end{array}\right)\left(\begin{array}{cc}a_{n-1}&1\\1&0\end{array}\right)\cdots\left(\begin{array}{cc}a_1&1\\1&0\end{array}\right)=\left(\begin{array}{cc}q_n&p_n\\q_{n-1}&p_{n-1}\end{array}\right).
\end{equation}
This in turn gives $p_{n-1}q_n-p_nq_{n-1}=(-1)^n$, and hence $\frac{p_{n-1}}{q_{n-1}}-\frac{p_n}{q_n}=\frac{(-1)^n}{q_{n-1}q_n}$. Also, for $x\geq0$, we have
\begin{equation}\label{CFprop1}
[a_1,\ldots,a_{n-1},a_n+x]=\frac{p_n+p_{n-1}x}{q_n+q_{n-1}x}.
\end{equation}

For $j,n\in\N$ with $j\leq n$, we define the positive coprime integers $p_{j,n}=p_{j,n}(a)$ and $q_{j,n}=q_{j,n}(a)$  by
\[\frac{p_{j,n}}{q_{j,n}}:=[a_j,a_{j+1},\ldots,a_n].\]
Taking transposes in \eqref{CFmatrices} reveals that $q_n(a_1,\ldots,a_n)=q_n(a_n,\ldots,a_1)$, which then implies that $q_n(a)=a_1q_{2,n}(a)+q_{3,n}(a)$, and more generally, $q_{j,n}(a)=a_jq_{j+1,n}(a)+q_{j+2,n}(a)$ for $j\leq n-2$. This equality extends to $j=n-1,n$ once we denote $q_{n+1,n}=q_{n+1,n}(a):=1$ and $q_{n+2,n}=q_{n+2,n}(a):=0$. Another subtle property of the values $q_{j,n}$ we wish to note is that
\begin{equation}\label{prodprop}
\prod_{j=1}^n\frac{p_{j,n}}{q_{j,n}}=\frac{1}{q_n}.
\end{equation}
(See \cite[Lemma 2.1, Theorem 3.6]{F} for proof.) Taking transposes in \eqref{CFmatrices} furthermore reveals that
\begin{equation}\label{CFinvert}
\frac{q_{n-1}}{q_n}=[a_n,a_{n-1},\ldots,a_1].
\end{equation}

Lastly, we define for a finite tuple $b=(b_1,\ldots,b_n)\in\N^n$ the set
\[I_b=\llbr b_1,\ldots,b_n\rrbr:={}\{[b_1,\ldots,b_n+t]:t\in[0,1]\}
=\{[a_1,a_2,\ldots]\in[0,1]:a_j=b_j,j=1,\ldots n\},\]
which is the closed interval between $\frac{p_{n+1}(b,1)}{q_{n+1}(b,1)}$ and $\frac{p_n(b)}{q_n(b)}$. We thus have
\begin{equation}
m(I_b)=\left|\frac{p_{n+1}(b,1)}{q_{n+1}(b,1)}-\frac{p_n(b,1)}{q_n(b,1)}\right|=\frac{1}{q_{n+1}(b,1)q_n(b,1)}
=\frac{1}{q_n(b)(q_n(b)+q_{n-1}(b))}.\label{cfgapmeas}
\end{equation}
Here and throughout this paper, $m$ denotes the Lebesgue measure on $[0,1]$.

\subsection{The Gauss map}\label{ssec:Gmap}
The Gauss map $G:[0,1]\rar[0,1]$ is defined by
\[G(x):=\begin{cases}
\{x^{-1}\}&\text{if $x\neq0$}\\
0&\text{if $x=0$}
\end{cases}\]
where $\{x\}=x-\lfloor x\rfloor$ denotes the fractional part. This map is invariant with respect to the Gauss measure $\nu$ given by
\[d\nu:=\frac{dx}{(1+x)\log2}.\]
We define the natural extension $\tilde{G}:[0,1]^2\rar[0,1]^2$ of the Gauss map by
\[\tilde{G}(x,y):=\left(G(x),\frac{1}{\lfloor x^{-1}\rfloor+y}\right)\]
which is invariant with respect to the measure $\tilde{\nu}$ given by
\[d\tilde{\nu}:=\frac{dx\,dy}{(1+xy)^2\log2}.\]
The action of $G$ and $\tilde{G}$ on continued fractions is as follows:
\begin{align*}
G([a_1,a_2,\ldots])&=[a_2,a_3,\ldots];\\
\tilde{G}([a_1,a_2,\ldots],[b_1,b_2,\ldots])&=([a_2,a_3,\ldots],[a_1,b_1,b_2,\ldots]).
\end{align*}
In other words, $G$ and $\tilde{G}$ act respectively as the one and two-sided shifts on the continued fraction expansions of their arguments. From these equalities, it is easy to see that the periodic points of $G$ are exactly the periodic continued fractions of the form
\[[\overline{a_1,\ldots,a_n}]:=[a_1,\ldots,a_n,a_1,\ldots,a_n,\ldots],\]
i.e., the reduced quadratic irrationals $\omega\in[0,1]$ with conjugate root $\bar{\omega}<-1$; and the periodic points of $\tilde{G}$ are of the form
\[([\overline{a_1,a_2,\ldots,a_n}],[\overline{a_n,a_{n-1},\ldots,a_1}]),\]
where the continued fraction expansion of the second argument is the reverse of that of the first. Alternatively, the periodic points of $\tilde{G}$ are of the form $(\omega,-\bar{\omega}^{-1})$, where $\omega$ is a reduced quadratic irrational. Notice that $\omega\leftrightarrow(\omega,-\bar{\omega}^{-1})$ gives a natural one-to-one correspondence between the periodic points of $G$ and $\tilde{G}$.

Let $Q_G$ denote the set of nonzero periodic points of $G$. To each $\omega\in Q_G$ with minimal even periodic expansion $\omega=[\overline{a_1,\ldots,a_{2n}}]$, we associate the value
\begin{equation}\label{length1}
\ell(\omega):=-2\sum_{j=1}^{2n}\log(G^j(\omega))
\end{equation}
which is the length of a corresponding geodesic in the modular surface (see Section \ref{ssec:Series}). For future reference, we analogously define, for a given tuple $a=(a_1,\ldots,a_n)\in\N^n$ of any length,
\[\ell(a):=-2\sum_{j=1}^{n}\log(G^j[\overline{a_1,a_2,\ldots,a_n}]).\]
 We then let
\[Q_G(T):=\{\omega\in Q_G:\ell(\omega)\leq T\}.\qquad(T>0)\]
The result of Pollicott \cite[Theorem 3]{P} states that for all $f\in C([0,1])$, we have
\begin{equation}\label{Gres1}
\lim_{T\rar\infty}\frac{1}{|Q_G(T)|}\sum_{\omega\in Q_G(T)}f(\omega)=\int_{[0,1]} f\,d\nu;
\end{equation}
and it then follows from Kelmer's result \cite[Lemma 17]{Ke} that for all $f\in C([0,1]^2)$,
\begin{equation}\label{Gres2}
\lim_{T\rar\infty}\frac{1}{|Q_G(T)|}\sum_{\omega\in Q_G(T)}f(\omega,-\bar{\omega}^{-1})=\int_{[0,1]^2}f\,d\tilde{\nu}.
\end{equation}

\subsection{The Farey map}\label{ssec:Fmap}
The main goal of this paper is to formulate and prove results analogous to \eqref{Gres1} and \eqref{Gres2} for the periodic points of the Farey map and its natural extension. The Farey map $F:[0,1]\rar[0,1]$ is defined by
\[F(x):=\begin{cases}
\displaystyle{\frac{x}{1-x}}&\text{if $0\leq x\leq\frac{1}{2}$}\\[6pt]
\displaystyle{\frac{1-x}{x}}&\text{if $\frac{1}{2}<x\leq1$}.
\end{cases}\]
The invariant measure for $F$ that is absolutely continuous with respect to the Lebesgue measure is the infinite measure $\mu$ given by
\[d\mu:=\frac{dx}{x}.\]
The natural extension $\tilde{F}:[0,1]^2\rar[0,1]^2$ of $F$ is defined by
\[\tilde{F}(x,y):=\begin{cases}
\displaystyle{\left(\frac{x}{1-x},\frac{y}{1+y}\right)}&\text{if $0\leq x\leq\frac{1}{2}$}\\[8pt]
\displaystyle{\left(\frac{1-x}{x},\frac{1}{1+y}\right)}&\text{if $\frac{1}{2}<x\leq1$},
\end{cases}\]
and has infinite invariant measure $\tilde{\mu}$ given by
\[d\tilde{\mu}:=\frac{dx\,dy}{(x+y-xy)^2}.\]
For future reference, we note that $\mu$ is the natural projection of $\tilde{\mu}$ onto the first coordinate, i.e.,
\begin{equation}\label{muequality}
\int_{[0,1]^2}f(x)\,d\tilde{\mu}(x,y)=\int_{[0,1]}f\,d\mu.\qquad(f\in L^1(\mu))
\end{equation}
The maps $F$ and $\tilde{F}$ act on continued fractions according to
\begin{align*}
F([a_1,a_2,\ldots])&=\begin{cases}
[a_1-1,a_2,\ldots]&\text{if $a_1\geq2$}\\
[a_2,a_3,\ldots]&\text{if $a_1=1$};
\end{cases}\\
\tilde{F}([a_1,a_2,\ldots],[b_1,b_2,\ldots])&=\begin{cases}
([a_1-1,a_2,\ldots],[b_1+1,b_2,\ldots])&\text{if $a_1\geq2$}\\
([a_2,a_3,\ldots],[1,b_1,b_2,\ldots])&\text{if $a_1=1$}.
\end{cases}
\end{align*}
So, like $G$ and $\tilde{G}$, the Farey map and its extension act as shifts on the continued fraction expansions of its arguments, though in a slower manner, shifting a $1$ in a digit instead of a whole digit at a time. In fact, $F$ is a slowdown of $G$ as demonstrated by the equality
\[F^{\lfloor x^{-1}\rfloor}(x)=G(x).\qquad(x\neq0)\]
Also, by \cite[Theorem 1]{BY}, $G$ is isomorphic to the induced transformation $F_{A}:A\rar A$ of the Farey map on $A:=[1/2,1]=\{[1,a_1,a_3,\ldots]:a_j\in\N\}$ defined by
\[F_A(x)=F^{n_A(x)}(x),\quad\text{where}\quad n_A(x)=\min\{n\in\N:F^n(x)\in A\}.\]
This can be easily seen from the equality
\[F_A([1,a_1,a_2,\ldots])=[1,a_2,a_3,\ldots].\]
Similarly, $\tilde{G}$ can be seen as isomorphic to the induced transformation $\tilde{F}_{\tilde{A}}:\tilde{A}\rar\tilde{A}$ of $\tilde{F}$ on $\tilde{A}=(0,1]\times(1/2,1]$ defined by
\begin{equation}\label{inducedmap1}
\tilde{F}_{\tilde{A}}(x)=\tilde{F}^{n_{\tilde{A}}(x)}(x),\quad\text{where}\quad n_{\tilde{A}}(x)=\min\{n\in\N:\tilde{F}^n(x)\in\tilde{A}\},
\end{equation}
from the equality
\begin{equation}\label{inducedmap2}
\tilde{F}_{\tilde{A}}([a_1,a_2,\ldots],[1,b_1,b_2,\ldots])=([a_2,a_3,\ldots],[1,a_1,b_1,b_2,\ldots]).
\end{equation}
In Section \ref{Sec3}, we see how this relationship between $\tilde{G}$ and $\tilde{F}$ suggests to us how to enlarge the cross section of Series in a way analogous to how one might enlarge the domain of $\tilde{G}$ to obtain a map acting like $\tilde{F}$.

Early studies of the Farey map include \cite{Feig,FPT} in the context of thermodynamics, and \cite{Ito}, where the natural extension $\tilde{F}$ was also introduced, in examining mediant continued fraction convergents. See also \cite{BY} for more details on the above properties of $F$ and $\tilde{F}$ and their relationships to $G$ and $\tilde{G}$, in addition to continued fraction applications.

We have the following characterizations of the periodic points of $F$ and $\tilde{F}$ which provide connections to those of $G$ and $\tilde{G}$.

\begin{proposition}\label{P1}\ 
\begin{itemize}
\item[(i)] A number $x\in(0,1]$ is a periodic point of $F$ if and only if
\[x=[a_1-k,\overline{a_2,\ldots,a_n,a_1}],\]
for some $a_j\in\N$ and $k\in\{0,\ldots,a_1-1\}$. In other words, the nonzero periodic points of $F$ are of the form $F^k(\omega)$, where $\omega\in Q_G$.
\item[(ii)] A point in $[0,1]^2\bsl\{(0,0)\}$ is a periodic point of $\tilde{F}$ if and only if it is of the form
\begin{equation*}
\tilde{F}^k([\overline{a_1,a_2,\ldots,a_n}],[1,\overline{a_n,a_{n-1}\ldots,a_1}])
=([a_1-k,\overline{a_2,a_3,\ldots,a_n,a_1}],1+k,\overline{a_n,a_{n-1}\ldots,a_1}])
\end{equation*}
for some $a_j\in\N$ and $k\in\{0,\ldots,a_1-1\}$. Equivalently, the nonzero periodic points of $\tilde{F}$ are of the form $\tilde{F}^k(x)$, where $x$ is a periodic point of the induced map $\tilde{F}_{\tilde{A}}$.
\end{itemize}
\end{proposition}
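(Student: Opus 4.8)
The plan is to prove both parts directly from the explicit actions of $F$ and $\tilde F$ on continued fractions recorded above, together with the identity $G^j([a_1,a_2,\ldots])=F^{a_1+\cdots+a_j}([a_1,a_2,\ldots])$, which follows by iterating $F^{\lfloor y^{-1}\rfloor}(y)=G(y)$ and $\lfloor([a_i,a_{i+1},\ldots])^{-1}\rfloor=a_i$. I would not invoke the cross-section picture of Section~\ref{Sec3}.

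For part~(i), the ``if'' direction is immediate: if $x=[a_1-k,\overline{a_2,\ldots,a_n,a_1}]$ with $0\le k\le a_1-1$, put $\omega:=[\overline{a_1,\ldots,a_n}]\in Q_G$; the first $k$ iterations of $F$ applied to $\omega$ are decrement steps, so $F^k(\omega)=x$, while $G^n(\omega)=\omega$ gives $F^{a_1+\cdots+a_n}(\omega)=\omega$, hence $F^{a_1+\cdots+a_n}(x)=F^k\bigl(F^{a_1+\cdots+a_n}(\omega)\bigr)=x$. For the ``only if'' direction, note first that the $F$-orbit of a rational in $(0,1]$ eventually reaches the fixed point $0$, so a nonzero $F$-periodic point $x$ is irrational, say $x=[a_1,a_2,\ldots]$. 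Pick $M\ge1$ with $F^M(x)=x$ and write $M=(a_1+\cdots+a_j)+r$ with $0\le r<a_{j+1}$; necessarily $j\ge1$, since for $j=0$ one would have $M<a_1$ and $F^M(x)=[a_1-M,a_2,\ldots]\ne x$. Then $F^M(x)=F^r(G^j(x))=F^r([a_{j+1},a_{j+2},\ldots])=[a_{j+1}-r,a_{j+2},a_{j+3},\ldots]$, and equating this with $[a_1,a_2,\ldots]$, by uniqueness of the continued fraction expansion of an irrational, yields $a_{j+1}=a_1+r$ and $a_{j+1+i}=a_{1+i}$ for all $i\ge1$. Thus $x=[a_1,a_2,\ldots,a_j,\overline{a_1+r,a_2,\ldots,a_j}]$, the asserted form with $n=j$, shift index $k=r$, and leading digit $a_1+r$.

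Part~(ii) runs along the same lines with the second coordinate tracked in parallel. If $(x,y)\ne(0,0)$ is $\tilde F$-periodic then $x$ is $F$-periodic; moreover $x\ne0$ (else the orbit is $(0,y/(1+my))_{m\ge0}\to(0,0)$, forcing $y=0$), so $x$ is irrational as above, and $y\ne0$ because the second coordinate becomes and stays positive after the first shift step of the first-coordinate descent, which occurs within $M$ iterations. Write $y=[b_1,b_2,\ldots]$ and $M=(a_1+\cdots+a_j)+r$ as before. I would record that, on the second coordinate, a decrement step acts by $[u_1,u_2,\ldots]\mapsto[u_1+1,u_2,\ldots]$ and a shift step by $[u_1,u_2,\ldots]\mapsto[1,u_1,u_2,\ldots]$, so the $m$-th block of the descent ($a_m-1$ decrements then one shift) acts by $[u_1,u_2,\ldots]\mapsto[1,u_1+a_m-1,u_2,\ldots]$; composing blocks $1,\ldots,j$ (after each block the transformed second coordinate has leading digit $1$, so block $m+1$ merely inserts $a_{m+1}$ in front of it, which is how $a_2,\ldots,a_j$ pile up in reverse) and then appending the final $r$ decrements gives
\[
\tilde F^M(x,y)=\bigl([a_{j+1}-r,a_{j+2},\ldots],\ [1+r,a_j,a_{j-1},\ldots,a_2,\,b_1+a_1-1,\,b_2,b_3,\ldots]\bigr).
\]
Equating with $(x,y)$ and invoking uniqueness of continued fractions recovers $x$ exactly as in part~(i) and forces, in the second coordinate, $b_1=1+r$, $(b_2,\ldots,b_j)=(a_j,\ldots,a_2)$, $b_{j+1}=b_1+a_1-1=a_1+r$, and $b_{j+1+i}=b_{1+i}$ for $i\ge1$, i.e. $y=[1+r,\overline{a_j,a_{j-1},\ldots,a_2,a_1+r}]$; comparing with the $x$-part shows this is precisely the form in Proposition~\ref{P1}(ii), with the reversal of the period block appearing of its own accord. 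For the converse, given a point of the stated form, $(x_0,y_0):=([\overline{a_1,\ldots,a_n}],[1,\overline{a_n,\ldots,a_1}])$ lies in $\tilde A$ and, by \eqref{inducedmap2}, satisfies $\tilde F_{\tilde A}^n(x_0,y_0)=(x_0,y_0)$; hence by \eqref{inducedmap1} it is $\tilde F$-periodic, and applying $\tilde F^k$ for $0\le k\le a_1-1$ (all decrement steps) returns the given point, which is therefore $\tilde F$-periodic. The ``equivalently'' reformulations in both parts drop out along the way.

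The real work is the second-coordinate computation in part~(ii): one must get the order of operations within a block right (the decrements precede the shift), check that the final $r$ steps contain no shift (this uses $r<a_{j+1}$), and then observe that the anomalous entry $b_1+a_1-1$ at position $j+1$ combines with $b_1=1+r$ to yield exactly the leading digit $a_1+r$ of the period of $x$, which is what pins $y$ down to the reversed-block form. The remainder is uniqueness of continued fraction expansions plus the identity $G^j=F^{a_1+\cdots+a_j}$.
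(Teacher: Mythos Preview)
Your argument is correct and carries out in full the elementary proof that the paper explicitly omits; the paper's one-sentence sketch (a periodic $F$-orbit meets $A$ in a periodic $F_A$-orbit, and likewise for $\tilde F$) is just a repackaging of your direct continued-fraction computation. The only point you leave implicit is that the second coordinate $y$ must be irrational before invoking uniqueness of its expansion, but this follows immediately since both second-coordinate branches $p/q\mapsto p/(p+q)$ and $p/q\mapsto q/(p+q)$ strictly increase the reduced denominator once $y>0$, so no rational $y$ can be periodic.
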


The proof of this proposition is elementary, and so is omitted. In short, these characterizations follow in a straightforward manner from the fact that the intersection of $A$ ($\tilde{A}$ respectively) with any periodic orbit of $F$ ($\tilde{F}$ respectively) must be a periodic orbit of $F_A$ ($\tilde{F}_{\tilde{A}}$ respectively).

Notice that, analogous to the periodic points of $G$ and $\tilde{G}$, there is a natural correspondence
\[[a_1-k,\overline{a_2,\ldots,a_n,a_1}]\leftrightarrow([a_1-k,\overline{a_2,\ldots,a_n,a_1}],[1+k,\overline{a_n,\ldots,a_1}])\]
between the periodic points of $F$ and $\tilde{F}$. We let $Q_F$ be the set of all nonzero periodic points of $F$, and for a given $\omega\in Q_F$, we let $\tilde{\omega}\in[0,1]$ be such that $(\omega,\tilde{\omega})$ is the periodic point of $\tilde{F}$ corresponding to $\omega$.

We extend the definition of the length function $\ell$ on $Q_G$ to $Q_F$ by letting
\[\ell(F^k(\omega)):=\ell(\omega)\]
for all $\omega\in Q_G$ and $k\in\N$. We shall see that this definition follows naturally from the correspondence between the primitive closed geodesics in the modular surface and the periodic points of the Farey map which we develop in Section \ref{Sec3}. Also, define the set
\[Q_F(T)=\{\omega\in Q_F:\ell(\omega)\leq T\}.\]

We can now formulate our main theorem. It is best expressed in terms of proving the equidistribution in $[0,1]$ of the weighted points in $Q_F(T)$ as $T\rar\infty$. For $T>0$, we define the measure $m_T$ on $[0,1]$ by the equality
\[\int_{[0,1]}f\,dm_T:=\frac{\sum_{\omega\in Q_F(T)}\omega f(\omega)}{\sum_{\omega\in Q_F(T)}\omega}.\qquad (f\in C([0,1]))\]
In other words, $m_T$ is the sum of the Dirac delta measures over the points $\omega\in Q_F(T)$ with weight $\omega$, normalized to be a probability measure. Our main result is the following:

\begin{theorem}\label{T1}
For $f\in C([0,1])$, we have
\[\lim_{T\rar\infty}\int_{[0,1]}f\,dm_T=\int_0^1f(x)\,dx.\]
In other words, the weighted set of periodic points of $F$ given by the measure $m_T$ equidistributes with respect to the Lebesgue measure on $[0,1]$.
\end{theorem}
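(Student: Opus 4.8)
Fix $f\in C([0,1])$ and write $S_f(T):=\sum_{\omega\in Q_F(T)}\omega f(\omega)$, so that $\int f\,dm_T=S_f(T)/S_1(T)$. The plan is to reduce the asymptotics of $S_f(T)$ to the equidistribution \eqref{Gres1} of the Gauss periodic points. By Proposition~\ref{P1}(i), and since $\ell$ is constant along $G$-orbits, every element of $Q_F(T)$ is uniquely of the form $F^k(\omega)$ with $\omega\in Q_G(T)$ and $0\le k<\lfloor\omega^{-1}\rfloor$; moreover, writing $\omega=[a_1,a_2,\dots]$ one has $F^k(\omega)=[a_1-k,a_2,a_3,\dots]=\omega/(1-k\omega)$. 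Grouping the periodic points of $F$ by their ``parent'' $\omega$ therefore gives
\[
S_f(T)=\sum_{\omega\in Q_G(T)}\phi_f(\omega),\qquad
\phi_f(\omega):=\sum_{k=0}^{\lfloor\omega^{-1}\rfloor-1}\frac{\omega}{1-k\omega}\,f\!\left(\frac{\omega}{1-k\omega}\right).
\]

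Next I would record the two features of $\phi_f$ that drive the argument. First, on each interval $(\tfrac1{a+1},\tfrac1a]$ one has $\phi_f(\omega)=\sum_{j=1}^{a}\tfrac1{j+\omega^{-1}-a}\,f(\tfrac1{j+\omega^{-1}-a})$, so $\phi_f$ is continuous there (real-analytic when $f$ is), its only discontinuities being jumps of size $|f(1)|$ at the points $1/a$. Second, and crucially, $|\phi_f(\omega)|\le\|f\|_\infty\bigl(1+\log(1/\omega)\bigr)$, i.e.\ $\phi_f$ is only \emph{logarithmically} unbounded as $\omega\to0$; this is precisely the point of building the weight $\omega$ into $m_T$, for without it one would be left with an $O(1/\omega)$ blow-up. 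I would then adapt Pollicott's argument \cite{P} to prove
\[
\lim_{T\to\infty}\frac1{|Q_G(T)|}\sum_{\omega\in Q_G(T)}\phi_f(\omega)=\int_{[0,1]}\phi_f\,d\nu .
\]
The mechanism is that the Ruelle--Perron--Frobenius operator $\mathcal L_s$ of the Gauss map, twisted so that its $a$-th branch carries the extra factor $\sum_{j=1}^{a}\tfrac1{j+x}f(\tfrac1{j+x})$ (this equals $\phi_f(\tfrac1{a+x})$ and, for each fixed $a$, is continuous on $[0,1]$, so the seam-discontinuities of $\phi_f$ never appear branch by branch), is still nuclear with holomorphic Fredholm determinant for $\operatorname{Re}s>\tfrac12$, because that factor grows only like $\log a$. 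Hence the attached dynamical zeta function has the same analytic structure near the leading pole $s=1$ as in \cite{P}, and Pollicott's contour shift and Tauberian argument yield the displayed limit. Checking that this logarithmic unboundedness does not damage the operator-theoretic input is the principal obstacle; the remainder is bookkeeping.

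Finally, I would evaluate the constant. Substituting $\omega=1/(a+v)$ with $v\in[0,1)$ on $(\tfrac1{a+1},\tfrac1a]$, interchanging the sums over $a$ and $j$, and telescoping $\sum_{a\ge j}\tfrac1{(a+v)(a+v+1)}=\tfrac1{j+v}$ collapses the integral to
\[
\int_{[0,1]}\phi_f\,d\nu=\frac1{\log2}\sum_{j\ge1}\int_{1/(j+1)}^{1/j}f(s)\,ds=\frac1{\log2}\int_0^1 f(x)\,dx .
\]
In particular, the same limit applied with $f\equiv1$ gives $|Q_G(T)|^{-1}S_1(T)\to\int_{[0,1]}\phi_1\,d\nu=1/\log2>0$. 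Combining the three displays,
\[
\int_{[0,1]}f\,dm_T=\frac{S_f(T)}{S_1(T)}=\frac{|Q_G(T)|^{-1}S_f(T)}{|Q_G(T)|^{-1}S_1(T)}\;\xrightarrow[\;T\to\infty\;]{}\;\frac{\int_{[0,1]}\phi_f\,d\nu}{\int_{[0,1]}\phi_1\,d\nu}=\int_0^1 f(x)\,dx ,
\]
which is the theorem. (One may equally route the computation through the two-variable result \eqref{Gres2}, writing $S_f(T)=\sum_{\omega\in Q_G(T)}\Psi_f(\omega,-\bar\omega^{-1})$ with $\Psi_f(x,y)=\sum_{c=1}^{\lfloor1/y\rfloor}\tfrac1{c+x}f(\tfrac1{c+x})$ and integrating $\Psi_f$ against $\tilde\nu$; the constant $\tfrac1{\log2}\int_0^1 f$ is unchanged.)
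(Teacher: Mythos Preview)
Your proposal is correct and follows essentially the same route as the paper: the reduction $S_f(T)=\sum_{\omega\in Q_G(T)}\phi_f(\omega)$ is exactly the paper's $\bar g$, the logarithmic bound you isolate is precisely what the paper exploits (via an approximation by functions $p(x)(1-\log x)$ in Lemma~\ref{L1}) to push Pollicott's transfer-operator analysis through, and your evaluation $\int\phi_f\,d\nu=(\log 2)^{-1}\int_0^1 f$ matches the paper's computation. The only substantive difference is packaging: the paper extracts the absolute growth rate $S_f(T)\sim(3\log 2/\pi^2)\int\phi_f\,d\nu\cdot e^T$ via Wiener--Ikehara (and must then reconcile even and odd periodic expansions in \S\ref{ssec:equivalent}), whereas you normalize by $|Q_G(T)|$ and take a ratio, which is cleaner but yields slightly less.
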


Alternatively, one may view this theorem as saying that the unweighted periodic points of $F$ equidistribute according to the invariant measure $\mu$. However, one must restrict the functions over which equidistribution can be tested to those of the form $x\mapsto xf(x)$, with $f\in C([0,1])$. Additionally, one must maintain the normalizing factor $\sum_{\omega\in Q_F(T)}\omega$; because $\mu$ has infinite measure, normalizing by $\#Q_F(T)$ would yield $0$ in the limit. Indeed, we see below that the growth rate of $\sum_{\omega\in Q_F(T)}\omega$ is commensurate with $e^T$. However, the growth of $\#Q_F(T)$ is given by
\begin{equation}\label{QFgrowth}
\#Q_F(T)=\frac{1}{4\zeta(2)}Te^T+\frac{1}{2\zeta(2)}\left(\gamma-\frac{3}{2}-\frac{\zeta'(2)}{\zeta(2)}\right)e^T+O(T^4e^{3T/4}),\qquad(T\rar\infty)
\end{equation}
where $\zeta$ is the Riemann zeta function and $\gamma$ is Euler's constant. This follows from the fact that
\[\#Q_F(T)=\sum_{\omega\in Q_G(T)}\lfloor\omega^{-1}\rfloor\]
(for each $\omega=[\overline{a_1,\ldots,a_n}]\in Q_G(T)$ there exist $a_1=\lfloor\omega^{-1}\rfloor$ corresponding elements of $Q_F(T)$); and in analyzing the growth of the number of products of the matrices $\left(\begin{smallmatrix}1&1\\0&1\end{smallmatrix}\right)$ and $\left(\begin{smallmatrix}1&0\\1&1\end{smallmatrix}\right)$ with bounded trace, Kallies et al.\ \cite{KOPS} provided an asymptotic expression for the sum on the right, giving the main term in \eqref{QFgrowth}. The second term was extracted by Boca \cite{Bo}, who then obtained the error term $O(e^{(7/8+\epsilon)T})$. Ustinov \cite{Ust} later analyzed the error term more carefully, and obtained that shown in \eqref{QFgrowth}.

\begin{remark}
Results analogous to Theorem \ref{T1} formulated for certain maps $t:[0,1]\rar[0,1]$ that are uniformly expanding as in, for example, \cite{Mo}, establish the equidistribution of the unweighted periodic points $\{\omega\in[0,1]:t^n(\omega)=\omega,\log((t^n)'(\omega))\leq T\}$ with respect to the absolutely continuous probability measure of $t$.

Similarly, the equidistribution of the periodic points of the Farey map following from \cite{PU} is formulated for the unweighted set $Q_F'(T)=\{\omega\in[0,1]:F^n(\omega)=\omega,\log((F^n)'(\omega))\leq T\}$, though any subset of $[0,1]$ over which equidistribution is tested must be of finite $\mu$-measure. Specifically, \cite[Theorem 18.1]{PU} says that if $B\subseteq[0,1]$ such that $\mu(B)<\infty$ and $m(\partial B)=0$, then
\[\#(Q_F'(T)\cap B)\sim\frac{6\mu(B)}{\pi^2}e^T.\qquad(T\rar\infty)\]
We obtain the very similar asymptotic formula \eqref{asympt2} below. The difference in our results stems primarily the fact that while $\ell(\omega)=\log((F^n)'(\omega))$ if $n\in\N$ is the least element such that $F^n(\omega)=\omega$ and $F^k(\omega)$ is a periodic continued fraction of even period for some $k$, we have $\ell(\omega)=2\log((F^n)'(\omega))$ if $F^k(\omega)$ is a periodic continued fraction of odd period for some $k$. Thus the periodic points of $F$ corresponding to odd continued fraction periods are given less weight in our situation (see Section \ref{ssec:equivalent}).
\end{remark}

\begin{remark}
If we define $\tilde{Q}_F(T)$ to be the set of $\omega\in Q_F$ such that $F^n(\omega)=\omega$ for some $n\leq T$, then using the fact that $F$ and the tent map $t:[0,1]\rar[0,1]$, $t(x)=\min\{2x,2-2x\}$, are conjugate via the Minkowski question mark function $?$ \cite{Mi,KS}, it is straightforward to show that as $T\rar\infty$, $\tilde{Q}_F(T)$ equidistributes with respect to the measure having distribution function $?$. Indeed, $?$ maps $\tilde{Q}(T)$ to the periodic points of $t$ of period at most $T$, which can be easily shown to equidistribute with respect to the Lebesgue measure. In fact, it is elementary to show that the periodic points of $t$ of period exactly $T$ ($T\in\N$) equidistribute, implying the corresponding equidistribution of the points $\tilde{Q}_F(T)\bsl\bigcup_{T'<T}\tilde{Q}_F(T')$.
\end{remark}

Using the correspondence between the periodic points of $F$ and $\tilde{F}$, we also obtain an analogous equidistribution result for the periodic points of $\tilde{F}$ as a corollary of Theorem \ref{T1}. Define the function $h:[0,1]^2\rar\R$ by $h(x,y)=(x+y-xy)^2$ and the measure $\tilde{m}_T$ on $[0,1]^2$ by
\[\int_{[0,1]^2}f\,d\tilde{m}_T:=\frac{\sum_{\omega\in Q_F(T)}h(\omega,\tilde{\omega})f(\omega,\tilde{\omega})}{\sum_{\omega\in Q_F(T)}h(\omega,\tilde{\omega})}.\qquad(f\in C([0,1]^2))\]
We then have the following:

\begin{corollary}\label{C1}
For all $f\in C([0,1]^2)$,
\[\lim_{T\rar\infty}\int_{[0,1]^2}f\,d\tilde{m}_T=\int_0^1\int_0^1f(x,y)\,dx\,dy,\]
that is, the weighted sequence of periodic points of $\tilde{F}$ given by $\tilde{m}_T$ equidistributes with respect to the Lebesgue measure on $[0,1]^2$.
\end{corollary}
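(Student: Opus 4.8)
The plan is to deduce Corollary~\ref{C1} from the equidistribution statement \eqref{Gres2} for the natural extension of the Gauss map; Theorem~\ref{T1} itself is not directly applicable, since $\tilde\omega$, and hence the weight $h(\omega,\tilde\omega)$, is not a continuous function of $\omega\in Q_F$. First I would set up a parametrization of $Q_F$: by Proposition~\ref{P1}(i) the map $(\theta,k)\mapsto F^k(\theta)$ is a bijection of $\{(\theta,k):\theta\in Q_G,\ 0\le k\le\lfloor\theta^{-1}\rfloor-1\}$ onto $Q_F$ (surjectivity is Proposition~\ref{P1}(i), while injectivity amounts to the identity $\#Q_F(T)=\sum_{\omega\in Q_G(T)}\lfloor\omega^{-1}\rfloor$ recorded after \eqref{QFgrowth}), and since $\ell(F^k(\theta))=\ell(\theta)$ it restricts to a bijection onto $Q_F(T)$ for each $T$. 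Writing $\theta^*:=-\bar\theta^{-1}$ (so $\theta^*=[\overline{a_n,\dots,a_1}]$ when $\theta=[\overline{a_1,\dots,a_n}]$), a direct computation from Proposition~\ref{P1}(ii) using $G\theta=\theta^{-1}-\lfloor\theta^{-1}\rfloor$ shows that for $\omega=F^k(\theta)$ and its $\tilde F$-partner $\tilde\omega$,
\[\omega=\frac{\theta}{1-k\theta},\qquad\tilde\omega=\frac{1}{1+k+\theta^*},\qquad\omega+\tilde\omega-\omega\tilde\omega=\frac{1+\theta\theta^*}{(1-k\theta)(1+k+\theta^*)},\]
so $h(\omega,\tilde\omega)=(1+\theta\theta^*)^2\big/\bigl((1-k\theta)^2(1+k+\theta^*)^2\bigr)$. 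Hence, for $f\in C([0,1]^2)$,
\[\sum_{\omega\in Q_F(T)}h(\omega,\tilde\omega)f(\omega,\tilde\omega)=\sum_{\theta\in Q_G(T)}\Psi_f(\theta,\theta^*),\qquad \Psi_f(x,y):=\sum_{k=0}^{\lfloor x^{-1}\rfloor-1}\frac{(1+xy)^2}{(1-kx)^2(1+k+y)^2}\,f\!\left(\frac{x}{1-kx},\frac{1}{1+k+y}\right),\]
and the denominator of $\int f\,d\tilde m_T$ is the same sum with $f\equiv1$; write $\Psi_1$ for the latter.

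Then I would apply \eqref{Gres2} to $\Psi_f$. One checks that $\Psi_f$ is bounded on $[0,1]^2$ — split the inner sum at $k\approx\tfrac12\lfloor x^{-1}\rfloor$, using $1-kx\ge\tfrac12$ on the lower range and $1-kx\ge x$ on the upper range — and continuous off the set $\bigl(\{0\}\cup\{N^{-1}:N\ge2\}\bigr)\times[0,1]$ where $\lfloor x^{-1}\rfloor$ jumps; this set is $\tilde\nu$-null and, as $Q_G$ consists of quadratic irrationals, disjoint from $Q_G\times[0,1]$. Squeezing $\Psi_f$ between continuous functions whose $\tilde\nu$-integrals differ by at most a prescribed $\varepsilon$ and applying \eqref{Gres2} to each then gives $\tfrac{1}{|Q_G(T)|}\sum_{\theta\in Q_G(T)}\Psi_f(\theta,\theta^*)\to\int_{[0,1]^2}\Psi_f\,d\tilde\nu$, and likewise for $\Psi_1$; dividing, $\int f\,d\tilde m_T\to\bigl(\int_{[0,1]^2}\Psi_f\,d\tilde\nu\bigr)\big/\bigl(\int_{[0,1]^2}\Psi_1\,d\tilde\nu\bigr)$, once the denominator is shown to be nonzero.

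Finally I would evaluate these integrals. The factor $(1+xy)^2$ in $\Psi_f$ cancels the density $\bigl((1+xy)^2\log2\bigr)^{-1}$ of $\tilde\nu$, so
\[\int_{[0,1]^2}\Psi_f\,d\tilde\nu=\frac{1}{\log2}\sum_{k\ge0}\ \iint_{0<x\le 1/(k+1),\ 0<y<1}\frac{f\!\left(\frac{x}{1-kx},\frac{1}{1+k+y}\right)}{(1-kx)^2(1+k+y)^2}\,dx\,dy.\]
For each $k$ the substitution $u=x/(1-kx)$, $v=1/(1+k+y)$ turns $\frac{dx\,dy}{(1-kx)^2(1+k+y)^2}$ into $du\,dv$ and carries the region onto $(0,1)\times(\tfrac{1}{k+2},\tfrac{1}{k+1})$ up to a null set, so the $k$-th summand equals $\tfrac{1}{\log2}\int_0^1\int_{1/(k+2)}^{1/(k+1)}f(u,v)\,dv\,du$. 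Since the intervals $(\tfrac{1}{k+2},\tfrac{1}{k+1})$, $k\ge0$, tile $(0,1)$, summation gives $\int_{[0,1]^2}\Psi_f\,d\tilde\nu=\tfrac{1}{\log2}\int_0^1\int_0^1 f(u,v)\,du\,dv$; in particular $\int_{[0,1]^2}\Psi_1\,d\tilde\nu=\tfrac{1}{\log2}\neq0$. Therefore $\int f\,d\tilde m_T\to\int_0^1\int_0^1 f$, which is Corollary~\ref{C1}.

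I expect the main obstacle to be the justification in the second step that \eqref{Gres2}, stated only for $f\in C([0,1]^2)$, may be applied to the bounded but merely $\tilde\nu$-almost-everywhere continuous function $\Psi_f$; this rests on the explicit boundedness estimate near $x=0$ and near the points $x=N^{-1}$, together with the standard sandwiching of a Riemann-integrable function between continuous functions from above and below. The remaining ingredients — the identities of Proposition~\ref{P1} and the term-by-term change of variables — are routine.
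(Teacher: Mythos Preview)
Your argument is correct and takes a genuinely different route from the paper. The paper deduces Corollary~\ref{C1} from Theorem~\ref{T1}: it first verifies the unnormalized asymptotic \eqref{asympt1} on indicators $1_{I_b\times I_{b'}}$ by using the $\tilde F$-invariance of both the periodic point set and $\tilde\mu$ to pull back any such rectangle to one of the form $I_{b''}\times[0,1]$, where Theorem~\ref{T1} applies; it then sandwiches $h\cdot f$ between step functions and the envelope $H(x,y)=\|f\|_\infty(x+y)$. Your approach bypasses Theorem~\ref{T1} entirely, going straight from Kelmer's two-dimensional Gauss result \eqref{Gres2} via the explicit parametrization $(\theta,k)\mapsto(F^k\theta,\widetilde{F^k\theta})$ and the change of variables computation. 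What your route buys is independence from Theorem~\ref{T1} and a cleaner identification of the limiting integral; what the paper's route buys is a logical structure in which the two-dimensional statement is visibly a consequence of the one-dimensional one, and an argument that avoids having to establish boundedness and Riemann integrability of an auxiliary function with infinitely many discontinuities.

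One technical point: your stated reason for boundedness of $\Psi_f$ is not quite sufficient. On the upper range $k>\tfrac12\lfloor x^{-1}\rfloor$ the uniform bound $1-kx\ge x$ gives only $(1-kx)^{-2}\le x^{-2}\asymp N^2$, and combining this with $\sum_{k>N/2}(1+k)^{-2}\asymp N^{-1}$ yields a contribution of order $N$, which is unbounded. What is needed is the sharper pointwise bound $1-kx\ge(N-k)x\ge(N-k)/(N+1)$; writing $j=N-k$ one then gets terms $\ll j^{-2}$ (since $(1+k+y)^{-2}\ll N^{-2}$ on that range), whose sum over $j\ge1$ converges. With this correction your sandwiching argument for applying \eqref{Gres2} to $\Psi_f$ goes through, and the rest of the proposal is sound.
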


\begin{proof}
We begin by following the reasoning of \cite[Lemma 17]{Ke} and showing the asymptotic formula
\begin{equation}\label{asympt1}
\lim_{T\rar\infty}\frac{\sum_{\omega\in Q_F(T)}f(\omega,\tilde{\omega})}{\sum_{\omega\in Q_F(T)}\omega}=\int_{[0,1]^2}f\,d\tilde{\mu}
\end{equation}
for appropriate approximating functions $f$. We first verify \eqref{asympt1} when $f$ is an indicator function $1_{I_b\times I_{b'}}$, where $b=(b_1,\ldots,b_n)$ and $b'=(b'_1,\ldots,b'_{n'})$ are any tuples and $I_b=\llbr b\rrbr$ and $I_{b'}=\llbr b'\rrbr$ are the sets defined in Section \ref{ssec:cfracs}. First note that if $B=b_1'+\cdots+b'_{n'}$, then $1_{I_b\times I_{b'}}\circ\tilde{F}^B=1_{I_{b''}\times[0,1]}$, where
\[b''=(1,b'_n,b'_{n-1},\ldots,b'_2,b'_1+b_1-1,b_2,b_3,\ldots,b_n).\]
Also, since $\tilde{F}$ forms a bijection of $\{(\omega,\tilde{\omega}):\omega\in Q_F(T)\}$, we have
\[\sum_{\omega\in Q_F(T)}1_{I_b\times I_{b'}}(\omega,\tilde{\omega})=\sum_{\omega\in Q_F(T)}(1_{I_b\times I_{b'}}\circ\tilde{F}^j)(\omega,\tilde{\omega})\]
for any $j\in\Z$. Therefore, using Theorem \ref{T1}, the equality \eqref{muequality}, and the $\tilde{F}$-invariance of $\tilde{\mu}$, we have
\begin{align*}
&\lim_{T\rar\infty}\frac{\sum_{\omega\in Q_F(T)}1_{I_b\times I_{b'}}(\omega,\tilde{\omega})}{\sum_{\omega\in Q_F(T)}\omega}
=\lim_{T\rar\infty}\frac{\sum_{\omega\in Q_F(T)}(1_{I_b\times I_{b'}}\circ\tilde{F}^B)(\omega,\tilde{\omega})}{\sum_{\omega\in Q_F(T)}\omega}\\
&\qquad\qquad\qquad=\lim_{T\rar\infty}\frac{\sum_{\omega\in Q_F(T)}1_{I_{b''}\times[0,1]}(\omega,\tilde{\omega})}{\sum_{\omega\in Q_F(T)}\omega}
=\lim_{T\rar\infty}\frac{\sum_{\omega\in Q_F(T)}1_{I_{b''}}(\omega)}{\sum_{\omega\in Q_F(T)}\omega}
=\int_{[0,1]}1_{I_{b''}}\,d\mu\\
&\qquad\qquad\qquad=\int_{[0,1]^2}1_{I_{b''}\times[0,1]}\,d\tilde{\mu}
=\int_{[0,1]^2}1_{I_{b''}\times[0,1]}\circ\tilde{F}^B\,d\tilde{\mu}
=\int_{[0,1]^2}1_{I_b\times I_{b'}}\,d\tilde{\mu}.
\end{align*}
We thus have \eqref{asympt1} for $f$ of the form $1_{I_b\times I_{b'}}$.

Next, \eqref{asympt1} can be easily verified when $f(x,y)=x$. Also, we see in Section \ref{ssec:Series} that the set $\{(\omega,\tilde{\omega}):\omega\in Q_F(T)\}$ is symmetric about the line $x=y$. As a result, \eqref{asympt1} holds also when $f(x,y)=y$.

We now have a sufficient set of approximating functions, so let $f\in C([0,1]^2)$. By splitting $f$ into its positive and negative parts, we may assume without loss of generality that $f\geq0$. For a given $\epsilon>0$, there exists a finite linear combination, which we denote by $f_\epsilon$, of indicator functions of the form $1_{I_b\times I_{b'}}$ such that $f_\epsilon\leq h\cdot f$ and
\[\int_{[0,1]^2}(h\cdot f - f_\epsilon)\,d\tilde{\mu}<\epsilon.\]
This is possible since the sets of the form $I_b\times I_{b'}$ generate the Borel $\sigma$-algebra of $[0,1]^2$ and
\[\int_{[0,1]^2}h\cdot f\,d\tilde{\mu}=\int_0^1\int_0^1 f(x,y)\,dx\,dy<\infty.\]
We then have
\begin{align*}
\liminf_{T\rar\infty}\frac{\sum_{\omega\in Q_F(T)}h(\omega,\tilde{\omega})f(\omega,\tilde{\omega})}{\sum_{\omega\in Q_F(T)}\omega}&\geq\lim_{T\rar\infty}\frac{\sum_{\omega\in Q_F(T)}f_\epsilon(\omega,\tilde{\omega})}{\sum_{\omega\in Q_F(T)}\omega}=\int_{[0,1]^2}f_\epsilon\,d\tilde{\mu}\\
&\geq\int_{[0,1]^2}h\cdot f\,d\tilde{\mu}-\epsilon.
\end{align*}
Letting $\epsilon\rar0$ yields
\begin{equation}\label{ineq1}
\liminf_{T\rar\infty}\frac{\sum_{\omega\in Q_F(T)}h(\omega,\tilde{\omega})f(\omega,\tilde{\omega})}{\sum_{\omega\in Q_F(T)}\omega}\geq\int_{[0,1]^2}h\cdot f\,d\tilde{\mu}.
\end{equation}
Now notice that $h(x,y)\cdot f(x,y)\leq H(x,y):=\|f\|_\infty(x+y)$ for $(x,y)\in[0,1]^2$. Repeating the above process used to produce the inequality \eqref{ineq1}, while replacing the function $h\cdot f$ with $H-h\cdot f$, we find that
\[\liminf_{T\rar\infty}\frac{\sum_{\omega\in Q_F(T)}(H-h\cdot f)(\omega,\tilde{\omega})}{\sum_{\omega\in Q_F(T)}\omega}\geq\int_{[0,1]^2}(H-h\cdot f)\,d\tilde{\mu}.\]
Since \eqref{asympt1} is satisfied when $f$ is replaced by $H$, we can cancel the $H$ in the above inequality to get
\[\limsup_{T\rar\infty}\frac{\sum_{\omega\in Q_F(T)}h(\omega,\tilde{\omega})f(\omega,\tilde{\omega})}{\sum_{\omega\in Q_F(T)}\omega}\leq\int_0^1\int_0^1f(x,y)\,dx\,dy,\]
and thus
\[\lim_{T\rar\infty}\frac{\sum_{\omega\in Q_F(T)}h(\omega,\tilde{\omega})f(\omega,\tilde{\omega})}{\sum_{\omega\in Q_F(T)}\omega}=\int_0^1\int_0^1f(x,y)\,dx\,dy.\]
Dividing this equality by the same equality, with $f$ replaced by the constant function $1$, yields the result.
\end{proof}

In an analogous manner to Theorem \ref{T1}, one can view Corollary \ref{C1} as the equidistribution of the unweighted periodic points of $\tilde{F}$ according to the measure $\tilde{\mu}$. As before, one must restrict the functions over which to test the equidistribution and maintain the normalization $\sum_{\omega\in Q_F(T)}h(\omega,\tilde{\omega})$. The appendix shows how the analysis of the number of products of $\left(\begin{smallmatrix}1&1\\0&1\end{smallmatrix}\right)$ and $\left(\begin{smallmatrix}1&0\\1&1\end{smallmatrix}\right)$ with bounded trace alluded to above can be used to obtain unweighted variations of Theorem \ref{T1} and Corollary \ref{C1} which have error terms.

\section{Geodesics in the modular surface and the Farey map}\label{Sec3}

\subsection{The modular surface and the geodesic flow}\label{ssec:modular}
Let $\Hp=\{x+iy:x,y\in\R,y>0\}$ denote the upper half of the complex plane equipped with the hyperbolic metric $ds^2=(dx^2+dy^2)/y^2$. The geodesics in $\Hp$ with this metric are vertical lines and the semicircles centered on the real line. The group $\PSL_2(\R)$ acts isometrically on $\Hp$ by linear fractional transformation. The modular surface is the quotient space $\mathcal{M}:=\PSL_2(\Z)\bsl\Hp$, whose geodesics are naturally projected from those of $\Hp$.

Let $T_1\Hp$ and $T_1\mathcal{M}$ be the unit tangent bundles of the upper half plane and modular surface, respectively. Then let $g_t:T_1\Hp\rar T_1\Hp$ denote the geodesic flow on $T_1\Hp$ so that for $(z,v)\in T_1\Hp$, $g_t(z,v)$ is the tangent vector obtained by starting at the base point $z$, and moving a distance $t$ along the geodesic tangent to the vector $(z,v)$. Let $\{g_t:t\in\R\}$ also be defined on $T_1\mathcal{M}$ by natural projection. Next, define the coordinates $(x,y,\theta)$ on $T_1\Hp$ (and, locally, on $T_1\mathcal{M}$ by projection) corresponding to the vector with base point $x+iy$ and at an angle $\theta$ counterclockwise from the vertical vector. Then the $g_\cdot$-invariant Liouville measure $\lambda$ given by $d\lambda:=dx\,dy\,d\theta/y^2$ is obtained by importing the Haar measure on $\PSL_2(\R)$ to $T_1\Hp$ via the natural identification
\begin{equation}\label{PSLid}
\gamma\mapsto\gamma(i,v_0)=(\gamma(i),\gamma'(i)v_0):\PSL_2(\R)\xrightarrow{\sim}T_1\Hp,
\end{equation}
where $v_0$ is the upward-pointing vector at $i$. The measure $\lambda$ naturally descends to $T_1\mathcal{M}$. Note also that under the above identification, the geodesic flow $g_t$ corresponds to right multiplication by
\[\left(\begin{array}{cc}e^{t/2}&0\\0&e^{-t/2}\end{array}\right).\]

We can use the alternative coordinates  $(\alpha,\beta,t)\in\R^3$ on $T_1\Hp$, which correspond to the point $(z,v)\in T_1\Hp$ such that $\alpha=\alpha(z,v):=\lim_{t\rar-\infty}g_t(z,v)$ is the endpoint of the geodesic $g_{(z,v)}:=\{g_s(z,v):s\in\R\}$ approached by $(z,v)$ under the geodesic flow in the backward direction, $\beta=\beta(z,v):=\lim_{t\rar\infty}g_t(z,v)$ is the endpoint of $g_{(z,v)}$ approached under the flow in the forward direction, and $t=t(z,v)$ is such that $g_{-t(z,v)}(z,v)$ is the apex of $g_{(z,v)}$. With respect to these coordinates, the Liouville measure is
\[d\lambda=\frac{d\alpha\,d\beta\,dt}{(\beta-\alpha)^2}.\]

\subsection{The cross section of Series and $\tilde{G}$}\label{ssec:Series}
The cross section of the geodesic flow considered by Series \cite{S} is
\[X=\{(z,v)\in T_1\mathcal{M}:0<|\alpha(z,v)|\leq1,|\beta(z,v)|\geq1,z\in i\R\}.\]
If $\beta(z,v)\neq\pm1$, then the geodesic flow returns $(z,v)$ to $X$. So the first return map $P$ for $X$ defined by
\[P(z,v):=g_{r(z,v)}(z,v),\quad\text{where}\quad r(z,v):=\min\{s>0:g_s(z,v)\in X\},\]
is well defined on $X^*:=\{(z,v)\in X:|\beta(z,v)|>1\}$. (Note that $X^*$ is of full measure in $X$ with respect to the measure $d\alpha\,d\beta/(\beta-\alpha)^2$ induced on $X$ by $\lambda$.) Using the correspondence between the continued fraction expansions of $\alpha(z,v)$ and $\beta(z,v)$ and the cutting sequence of the geodesic $g_{(z,v)}$, Series proved that $\tilde{G}$ is a factor of $P$. Specifically, if we parameterize $X$ by the coordinates $(U,V,\epsilon)\in(0,1]^2\times\{\pm1\}$, where $U=|\beta|^{-1}$, $V=|\alpha|$, and $\epsilon=\sign(\beta)$, and abuse notation by identifying $X$ with $(0,1]^2\times\{\pm1\}$, then
\[P(U,V,\epsilon)=(\tilde{G}(U,V),-\epsilon).\]
(Here we must also assume that $U^{-1}\notin\Z$.) So if we define $\pi_X:X\rar(0,1]^2$ by $\pi_X(U,V,\epsilon)=(U,V)$, then we have the commutative diagram
\[\xymatrix{
X^* \ar[d]^{\pi_X} \ar[r]^P & X \ar[d]^{\pi_X}\\
[0,1]^2 \ar[r]^{\tilde{G}} & [0,1]^2}\]
which expresses $\tilde{G}$ as a factor of $P$.

The cross section $X$ also allows us to see that there is a one-to-one correspondence between the closed geodesics in $T_1\mathcal{M}$ and periodic orbits of the return map $P$. Indeed, a closed geodesic is simply the $g_\cdot$-orbit in $T_1\mathcal{M}$ of one of points in a periodic orbit of $P$. A given periodic orbit of $P$ is of the form
\begin{equation}\label{orbit1}
\{(\tilde{G}^j([\overline{a_1,a_2,\ldots,a_{2n}}],[\overline{a_{2n},a_{2n-1},\ldots,a_1}]),\pm(-1)^j):j=1,\ldots,2n\},
\end{equation}
where $2n$ is the minimal even period length of $[\overline{a_1,a_2,\ldots,a_{2n}}]$. It follows from \cite[Section 3.2]{S} that the length of the geodesic corresponding to the orbit \eqref{orbit1} is
\[-2\sum_{j=1}^{2n}\log(G^j([\overline{a_1,a_2,\ldots,a_{2n}}])),\]
which inspired the definition \eqref{length1} in \cite{P}. Here, we wish to note that the closed geodesics corresponding to the orbits
\begin{align*}
&\{(\tilde{G}^j([\overline{a_1,a_2,\ldots,a_{2n}}],[\overline{a_{2n},a_{2n-1},\ldots,a_1}]),\pm(-1)^j):j=1,\ldots,2n\}\text{, and}\\
&\{(\tilde{G}^j([\overline{a_{2n},a_{2n-1},\ldots,a_1}],[\overline{a_1,a_2,\ldots,a_{2n}}]),\pm(-1)^j):j=1,\ldots,2n\}
\end{align*}
are permuted by the symmetry $(z,v)\mapsto(z,-v)$ on $T_1\mathcal{M}$. Indeed, the first orbit corresponds to the geodesic tangent to the element $(z,v)\in T_1\mathcal{M}$ with
\[\beta(z,v)=\pm[\overline{a_1;a_2,\ldots,a_{2n}}]\quad\text{and}\quad\alpha(z,v)=\mp[\overline{a_{2n},a_{2n-1},\ldots,a_1}],\]
whereas the second orbit corresponds to the geodesic tangent to the element $(z',v')\in T_1\mathcal{M}$ such that $z'\in i\R$,
\[\beta(z',v')=\pm[\overline{a_{2n};a_{2n-1},\ldots,a_1}],\quad\text{and}\quad\alpha(z',v')=\mp[\overline{a_1,a_2,\ldots,a_{2n}}].\]
Acting by the matrix $\left(\begin{smallmatrix}0&-1\\1&0\end{smallmatrix}\right)\in\PSL_2(\Z)$, we see that this geodesic is the same as that tangent to $(z'',v'')\in T_1\Hp$, where $z''=-(z')^{-1}$,
\[\beta(z'',v'')=\mp[\overline{a_{2n},a_{2n-1},\ldots,a_1}],\quad\text{and}\quad\alpha(z',v')=\pm[\overline{a_1;a_2,\ldots,a_{2n}}].\]
In other words, we have $(z'',v'')=(z,-v)$. Hence the two geodesics are the same length, i.e.,
\begin{equation}\label{lengtheq1}
\ell([\overline{a_1,a_2,\ldots,a_{2n}}])=\ell([\overline{a_{2n},a_{2n-1},\ldots,a_1}]).
\end{equation}

\subsection{A new cross section for $\tilde{F}$}\label{ssec:Fsection}
We now seek to find a cross section analogous to $X$ whose return map under the geodesic flow is a double cover of $\tilde{F}$. The fact that $\tilde{G}$ is isomorphic to the induced map \eqref{inducedmap1} hints that we should seek to expand the range of the parameter $V$ with respect to the coordinates $(U,V,\epsilon)$, or the range of $\alpha$ with respect to the coordinates $(\alpha,\beta)$. In fact, we define our new cross section $\bar{X}$ by
\[\bar{X}:=\{(z,v)\in T_1\mathcal{M}:\alpha(z,v)\neq0,|\beta(z,v)|\geq1,z\in i\R\};\]
so we have just removed the restriction $|\alpha|\leq1$ from the endpoint $\alpha$ of the geodesic determined by $(z,v)$. One can also see that $\bar{X}$ is simply all the nonvertical tangent vectors with base point on the positive imaginary axis. We can parameterize $\bar{X}$ by the coordinates $(U,W,\epsilon)\in(0,1]\times(0,1)\times\{\pm1\}$, where $W=(1+|\alpha|)^{-1}$, and as before, $\bar{U}=|\beta|^{-1}$ and $\epsilon=\sign(\beta)$. (We again abuse notation by identifying $\bar{X}$ with $(0,1]\times(0,1)\times\{\pm1\}$.) Our definition of the coordinate $W$ follows from the equality \eqref{inducedmap2}, which motivates us to change the second coordinates of our points in $X$ according to the map
\[[b_1,b_2,\ldots]\mapsto[1,b_1,b_2,\ldots].\]

Now let $\bar{P}:\bar{X}^*\rar\bar{X}$ be the first return map
\[\bar{P}(z,v):=g_{\bar{r}(z,v)}(z,v),\quad\text{where}\quad\bar{r}(z,v):=\min\{s>0:g_s(z,v)\in\bar{X}\},\]
where $\bar{X}^*=(0,1)^2\times\{\pm1\}$ is the set of points in $\bar{X}$ on which $\bar{P}$ is defined. Our main goal of this section is to prove the following:

\begin{theorem}\label{T2}
The natural extension of the Farey map $\tilde{F}$ is a factor of the return map $\bar{P}$. Specifically, we have
\begin{equation}\label{Pbar}
\bar{P}(U,W,\epsilon)=\begin{cases}
(\tilde{F}(U,W),\epsilon)&\text{if $0<U\leq\frac{1}{2}$}\\
(\tilde{F}(U,W),-\epsilon)&\text{if $\frac{1}{2}<U<1$}
\end{cases}
\end{equation}
so that if $\bar{\pi}_{\bar{X}}:X\rar[0,1]^2$ is defined by $\bar{\pi}_{\bar{X}}(U,W,\epsilon)=(U,W)$, we have the following commutative diagram:
\[\xymatrix{
\bar{X}^* \ar[d]^{\bar{\pi}_{\bar{X}}} \ar[r]^{\bar{P}} & \bar{X} \ar[d]^{\bar{\pi}_{\bar{X}}}\\
[0,1]^2 \ar[r]^{\tilde{F}} & [0,1]^2}\]
Also, the first return time function $\bar{r}$ is given by
\begin{equation}\label{Pbarrtf}
\bar{r}(U,W,\epsilon)=-\frac{1}{2}\log((1-U)(1-W)).
\end{equation}
\end{theorem}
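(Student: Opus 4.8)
The plan is to work in the $(\alpha,\beta,t)$ coordinates on $T_1\Hp$, use the known description of Series' return map $P$ on $X$, and carefully track what happens to a tangent vector in $\bar X$ as it flows forward to its first return to $\bar X$. Since $\bar X$ differs from $X$ only by dropping the constraint $|\alpha|\le 1$, a point $(z,v)\in\bar X^*$ with $|\alpha|\le 1$ lies in $X^*$ and its first return to $\bar X$ coincides with $P(z,v)$ (nothing is hit in between because $\bar X\supseteq X$ and the $\bar X$-points that are not in $X$ have $|\alpha|>1$, which cannot occur on a return after an elliptic/parabolic transformation that Series' analysis already handles); translating $P(U,V,\epsilon)=(\tilde G(U,V),-\epsilon)$ through the coordinate change $V=|\alpha|\mapsto W=(1+|\alpha|)^{-1}$ should give exactly the case $\frac12<U<1$ of \eqref{Pbar}, since the branch $U\mapsto (1-U)/U$ of $\tilde F$ is the one that shifts off a digit (the $a_1=1$ branch), matching $G$'s full shift. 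The case $0<U\le\frac12$ is the genuinely new one: here $\beta$ has $\lfloor\beta\rfloor\ge 2$ (i.e.\ $a_1\ge 2$), and flowing forward the geodesic first returns to the imaginary axis after the single parabolic map $z\mapsto z-1$ (equivalently $T^{-1}=\left(\begin{smallmatrix}1&-1\\0&1\end{smallmatrix}\right)$), which sends $\beta\mapsto\beta-1$ and $\alpha\mapsto\alpha-1$, i.e.\ in the $(U,W)$-coordinates exactly $(U,W)\mapsto(U/(1-U),W/(1+W))$, the $a_1\ge 2$ branch of $\tilde F$, and it preserves $\epsilon=\sign\beta$. So the core of the argument is: identify which element of $\PSL_2(\Z)$ realizes the first return in each of the two cases, and compute its action on $(\alpha,\beta)$.

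Concretely I would proceed as follows. First, observe that $\bar X$ consists precisely of the unit tangent vectors based on the positive imaginary axis, pointing off-vertical, so the return map is the "boundary crossing" map for the fundamental domain strip $\{-\tfrac12<\Rl z<\tfrac12\}$ cut along $i\R$; this is the standard setup in which returns are generated by $T^{\pm1}=\left(\begin{smallmatrix}1&\pm1\\0&1\end{smallmatrix}\right)$ and $S=\left(\begin{smallmatrix}0&-1\\1&0\end{smallmatrix}\right)$. Second, for $(z,v)\in\bar X^*$ parameterized by $(U,W,\epsilon)$, write $\beta=\epsilon U^{-1}$ and $\alpha=\epsilon\,\sign(\alpha)(W^{-1}-1)$ (one checks the sign of $\alpha$ is forced by $z\in i\R$ and $|\beta|>1$ exactly as in Series); then case on $U\le\frac12$ versus $U>\frac12$, which is $|\beta|\ge 2$ versus $1<|\beta|<2$. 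Third, in each case determine the group element $\gamma$ bringing the forward-flowed vector back to a vector based on $i\R$ with $|\gamma\beta|\ge 1$: for $|\beta|\ge 2$ it is $T^{-\sign\beta}$ (a single parabolic step, $\epsilon$ unchanged), for $1<|\beta|<2$ it is $S T^{-\sign\beta}$ or equivalently the Series return element, and this flips $\epsilon$. Fourth, read off the action on $(U,W)$ and confirm it is $\tilde F$ with the stated sign on $\epsilon$, completing \eqref{Pbar} and the commutative diagram.

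For the return-time formula \eqref{Pbarrtf}, I would use that under the identification \eqref{PSLid} the geodesic flow is right translation by $\mathrm{diag}(e^{t/2},e^{-t/2})$, so $\bar r$ is the hyperbolic translation length of the segment between the base point and its image; equivalently, in $(\alpha,\beta,t)$ coordinates the flow shifts $t$ additively, and the return time is the difference of the $t$-coordinates. A clean way: the return time equals $\log$ of the derivative cocycle of $\bar P$ read off the boundary, i.e.\ $\bar r=-\tfrac12\log\big|\tfrac{d(\gamma\alpha)}{d\alpha}\cdot\tfrac{d(\gamma\beta)}{d\beta}\big|^{?}$ — more precisely, for $\gamma=\left(\begin{smallmatrix}a&b\\c&d\end{smallmatrix}\right)$ one has $g_{\bar r}$ corresponding to the cocycle so that $\bar r=-\log|c\alpha+d|-\log|c\beta+d|$ is not quite it; the correct bookkeeping gives $\bar r=-\tfrac12\log\big((1-U)(1-W)\big)$ after substituting $\beta=\epsilon U^{-1}$, $\alpha=\epsilon\,\sign\alpha\,(W^{-1}-1)$ and the $\gamma$ found above, and using that $\bar r$ must be consistent across the two branches of $\tilde F$ (the parabolic branch contributes $-\tfrac12\log(1-U)$ "incrementally"). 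I expect this additivity/consistency check, together with pinning down the exact normalization relating the flow parameter to the cocycle $\log$-derivative, to be the fiddliest point; the cleanest route is to derive it once for Series' $X$ (where the total over an even-period orbit must reproduce the geodesic length \eqref{length1}) and then see that dropping $|\alpha|\le1$ only reorganizes the same total.

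The main obstacle I anticipate is not any single computation but correctly handling the sign of $\alpha$ and the bookkeeping of which primitive element of $\PSL_2(\Z)$ effects the first return in the new region $|\alpha|>1$ — one must verify that introducing those vectors does not create spurious intermediate returns and that the parabolic step $T^{\mp1}$ genuinely lands back in $\bar X$ (rather than overshooting), which is exactly where the slowdown from $G$ to $F$ is encoded. Everything else is a substitution once $\gamma$ is identified.
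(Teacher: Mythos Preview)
Your concrete plan for \eqref{Pbar}---case on $U\le\frac12$ versus $U>\frac12$, identify the group element as $T^{-\sign\beta}$ or $ST^{-\sign\beta}$, and read off the action on $(\alpha,\beta)$---is correct and is essentially what the paper does. However, your opening heuristic contains a genuine error: the claim that a point $(z,v)\in\bar X^*$ with $|\alpha|\le 1$ has $\bar P(z,v)=P(z,v)$ is false. Since $\bar X\supseteq X$, first returns to $\bar X$ can occur \emph{earlier} than first returns to $X$, and this is exactly what happens when $|\alpha|\le 1$ and $U\le\frac12$: starting from $\alpha<0$ with $|\alpha|\le 1$, one parabolic step gives $\alpha-1<-1$, so the return lands in $\bar X\setminus X$, not in $X$. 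Your concrete plan is unaffected because you case on $U$ (equivalently on $|\beta|$) rather than on $|\alpha|$, but you should drop or correct this paragraph. The paper also supplies a piece you only flag as an obstacle: it checks that $\Rl(z')=1$ really is the \emph{first} crossing of a $\PSL_2(\Z)$-translate of $i\R$ by observing that all nonvertical lines of the Farey tessellation lie below the semicircles joining adjacent integers, so a geodesic with $|\beta|>1$ meets the vertical line $\Rl=\pm 1$ before any other tessellation line.

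For the return time \eqref{Pbarrtf} your cocycle approach diverges from the paper and, as you concede, the bookkeeping is not pinned down. The paper's route is more direct and avoids the normalization issues you anticipate: writing $(z,v)$ with $z=iy$ and $v$ at angle $\theta$ from vertical, the identification \eqref{PSLid} gives $(z,v)$ as an explicit product of a diagonal and a rotation matrix; right-multiplying by $\mathrm{diag}(e^{t/2},e^{-t/2})$ and solving $\Rl(M(i))=1$ for $t$ yields a closed-form expression in $(y,\theta)$, and then substituting $U=y^{-1}\tan(\theta/2)$, $W=(1+y\tan(\theta/2))^{-1}$ gives \eqref{Pbarrtf} directly. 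This is a single computation, uniform in both branches, and I would recommend it over trying to assemble the return time from derivative cocycles along the boundary.
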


\begin{proof}
To begin, it is helpful to note that the set of base points of the vectors lifted from $\bar{X}$ to $T_1\Hp$ lie on the $\PSL_2(\Z)$ translates of the positive imaginary axis, which make up the Farey tesselation. Each of these translates is either a vertical geodesic with integer real part, or a semicircle connecting rational numbers $p/q<p'/q'$ satisfying $p'q-pq'=1$. This is easy to see by evaluating $\lim_{t\rar0,\infty}\gamma(it)$ for $\gamma\in\PSL_2(\Z)$. The important thing to note is that all of the nonvertical translates lie below the semicircles connecting adjacent integers. Part of the Farey tessellation is depicted in Figures \ref{fig1} and \ref{fig2}.

Now let $(z,v)\in\bar{X}^*$, which we identify with its coordinates $(U,W,\epsilon)\in(0,1)^2\times\{\pm1\}$, and assume for simplicity that $\epsilon=1$. (The argument for $\epsilon=-1$ mirrors the following.) Let $U=[a_1,a_2,\ldots]$ and $W=[b_1,b_2,\ldots]$, either of which may terminate. We first consider the case when $U\in(0,1/2]$, and hence $a_1\geq2$. Then the geodesic $g_{(z,v)}$ has endpoints $\beta=\beta(z,v)=U^{-1}=[a_1;a_2,\ldots]$ and $\alpha=\alpha(z,v)=-(V^{-1}-1)=-[b_1-1;b_2,\ldots]$. Since $\beta>1$, the first point in $\bar{X}$ that $(z,v)$ encounters under the geodesic flow is the point $(z',v')$, where $\Rl(z')=1$. The matrix $\left(\begin{smallmatrix}1&-1\\0&1\end{smallmatrix}\right)\in\PSL_2(\Z)$ identifies $(z',v')$ with the point $(z'',v'')$, where $\beta(z'',v'')=\beta-1=[a_1-1;a_2,\ldots]$ and $\alpha(z'',v'')=\alpha-1=-[b_1;b_2,\ldots]$. (See Figure \ref{fig1}.) The $(U,W,\epsilon)$ coordinates of this element are $U''=[a_1-1,a_2,\ldots]$, $W''=[b_1+1,b_2,b_3,\ldots]$, and $\epsilon''=1$. Thus \eqref{Pbar} holds for $U\in(0,1/2]$.

\begin{figure}
\centering
\begin{overpic}[width=0.65\textwidth]{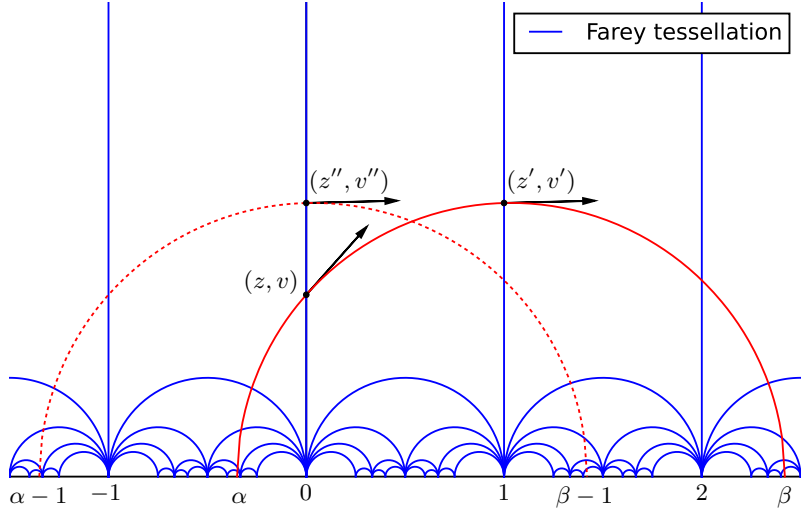}
\put (30,28.5){{\footnotesize $(z,v)$}}
\put (62.6,41){{\footnotesize $(z',v')$}}
\put (38.1,41){{\footnotesize $(z'',v'')$}}
\put (28.5,1.9) {{\footnotesize $\alpha$}}
\put (96.1,1.9) {{\footnotesize $\beta$}}
\put (1,1.9) {{\footnotesize $\alpha-1$}}
\put (68.7,1.9) {{\footnotesize $\beta-1$}}
\put (11,2.3) {{\footnotesize $-1$}}
\put (37,2.3) {{\footnotesize $0$}}
\put (61.5,2.3) {{\footnotesize $1$}}
\put (86,2.3) {{\footnotesize $2$}}
\end{overpic}
\caption{The case $U\leq1/2$}
\label{fig1}
\end{figure}

\begin{figure}
\centering
\begin{overpic}[width=0.65\textwidth]{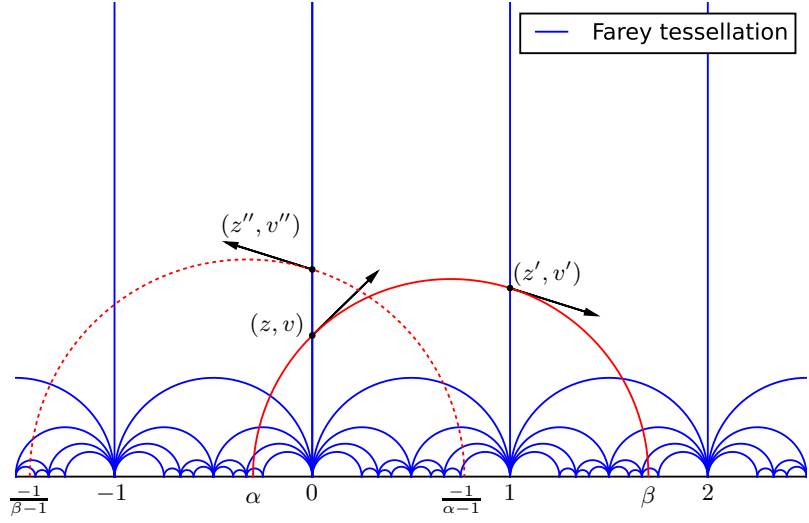}
\put (30,23.2){{\footnotesize $(z,v)$}}
\put (62.6,29.5){{\footnotesize $(z',v')$}}
\put (26.4,35.5){{\footnotesize $(z'',v'')$}}
\put (29.5,1.9) {{\footnotesize $\alpha$}}
\put (78.5,1.9) {{\footnotesize $\beta$}}
\put (53.5,1.6) {{\footnotesize $\frac{-1}{\alpha-1}$}}
\put (0,1.6) {{\footnotesize $\frac{-1}{\beta-1}$}}
\put (11,2.3) {{\footnotesize $-1$}}
\put (37,2.3) {{\footnotesize $0$}}
\put (61.5,2.3) {{\footnotesize $1$}}
\put (86,2.3) {{\footnotesize $2$}}
\end{overpic}
\caption{The case $U>1/2$}
\label{fig2}
\end{figure}

Next, consider the case when $U\in(1/2,1)$, i.e., $a_1=1$. As above, the corresponding geodesic $g_{(z,v)}$ has endpoints $\beta=[a_1;a_2,\ldots]$ and $\alpha=-[b_1-1;b_2,\ldots]$. Similar to the previous case, the first point in $\bar{X}$ that $(z,v)$ passes through under the geodesic flow is the point $(z',v')$, where $\Rl(z')=1$. However, since $\beta(z,v)=[1;a_2,\ldots]<2$, we must use the matrix $\left(\begin{smallmatrix}0&-1\\1&-1\end{smallmatrix}\right)\in\PSL_2(\Z)$ to identify $(z',v')$ with the point $(z'',v'')$ satisfying
\begin{align*}
\beta(z'',v'')=\frac{-1}{\beta-1}=\frac{-1}{[1;a_2,a_3,\ldots]-1}=-[a_2;a_3,a_4,\ldots]\text{ and}\\
\alpha(z'',v'')=\frac{-1}{\alpha-1}=\frac{-1}{-[b_1-1;b_2,b_3,\ldots]-1}=[b_1,b_2,b_3,\ldots].
\end{align*}
(See Figure \ref{fig2}.) The $(U,W,\epsilon)$ coordinates of $(z'',v'')$ are $U''=[a_2,a_3,\ldots]$, $W''=[1,b_1,b_2,\ldots]$, and $\epsilon''=-1$, which shows \eqref{Pbar} for $U\in(1/2,1)$. This proves that $\tilde{F}$ is a factor of $\bar{P}$.

We now outline the calculation of the return time function $\bar{r}$. As above, let $(z,v)\in\bar{X}^*$ with coordinates $(U,V,\epsilon)\in(0,1)^2\times\{\pm1\}$, and again assume that $\epsilon=1$, since the case $\epsilon=-1$ is a mirror image. Also, let $z=iy$ for $y\in\R,y>0$, and $\theta\in(0,\pi)$ be the angle $v$ makes with the upward-pointing vector in the counterclockwise direction. Then under the identification \eqref{PSLid}, $(z,v)$ is identified with
\[\left(\begin{array}{cc}y^{1/2}&0\\0&y^{-1/2}\end{array}\right)\left(\begin{array}{cc}\cos\frac{\theta}{2}&-\sin\frac{\theta}{2}\\\sin\frac{\theta}{2}&\cos\frac{\theta}{2}\end{array}\right).\]
By the previous part of the proof, $\bar{r}(U,V,\epsilon)$ is the constant $t>0$ such that the base point of $g_t(z,v)=(M(i),M'(i)v_0)$, where
\[M=\left(\begin{array}{cc}y^{1/2}&0\\0&y^{-1/2}\end{array}\right)\left(\begin{array}{cc}\cos\frac{\theta}{2}&-\sin\frac{\theta}{2}\\\sin\frac{\theta}{2}&\cos\frac{\theta}{2}\end{array}\right)\left(\begin{array}{cc}e^{t/2}&0\\0&e^{-t/2}\end{array}\right),\]
has real part equal to $1$. By a straightforward calculation, this implies that
\[\bar{r}(U,V,\epsilon)=\frac{1}{2}\log\left(\frac{y\sin(\theta/2)\cos(\theta/2)+\cos^2(\theta/2)}{y\sin(\theta/2)\cos(\theta/2)-\sin^2(\theta/2)}\right).\]
Then, using the fact that $U=y^{-1}\tan\frac{\theta}{2}$ and $W=\frac{1}{1+y\tan(\theta/2)}$, one can easily deduce \eqref{Pbarrtf}.
\end{proof}

\begin{remark}\label{R1}
See \cite[Section 7]{Arn} for a different way of relating the Farey map to a cross section of the geodesic flow in $T^1\mathcal{M}$.
\end{remark}

Hence, we can see how a given periodic orbit
\begin{equation}\label{orbit2}
\left\{\bar{P}^j([\overline{a_1,a_2,\ldots,a_{2n}}],[1,\overline{a_{2n},a_{2n-1},\ldots,a_1}],\pm1):j=1,\ldots,\sum_{k=1}^{2n}a_k\right\}
\end{equation}
of $\bar{P}$ naturally includes the periodic orbit \eqref{orbit1} of $P$, and so corresponds to the same closed geodesic. We therefore extend our definition of the length function $\ell$ to the periodic points of $F$ so that for all $k\in\N$, $\ell(F^k([\overline{a_1,\ldots,a_{2n}}]))$ is the length of the closed geodesic given by the $g_\cdot$-orbit of any point in \eqref{orbit2}, i.e.,
\[\ell(F^k([\overline{a_1,\ldots,a_{2n}}]))=-2\sum_{j=1}^{2n}\log(G^j([\overline{a_1,\ldots,a_{2n}}])).\]
We note here that if $\tau=[\overline{a_1,\ldots,a_{2n}}]$ has minimal even period length $2n$ and $k\in\{0,\ldots,a_1-1\}$, then defining $\omega=F^k(\tau)$, we have
\begin{align*}
\ell(\tilde{\omega})&=\ell([1+k,\overline{a_{2n},a_{2n-1},\ldots,a_1}])
=\ell(F^{a_1-1-k}([\overline{a_1,a_{2n},a_{2n-1},\ldots,a_2}]))\\
&=\ell([\overline{a_1,a_{2n},a_{2n-1},\ldots,a_2}])
=\ell([\overline{a_{2n},a_{2n-1},\ldots,a_1}])
=\ell([\overline{a_1,a_2,\ldots,a_{2n}}])
=\ell(\omega),
\end{align*}
where we used \eqref{lengtheq1} for the penultimate equality. This shows that for any $T>0$, the set $\{(\omega,\tilde{\omega}):\omega\in Q_F(T)\}$ is symmetric about the line $x=y$, a fact we used in the proof of Corollary \ref{C1}.

To conclude this section, we notice that the measure $d\alpha\,d\beta/(\beta-\alpha)^2$ on $\bar{X}$ induced by the Liouville measure is, in the coordinates $(U,W)$ (with $\epsilon$ fixed as $1$ or $-1$),
\[-\frac{d(U^{-1})\,d(-(W^{-1}-1))}{(U^{-1}+(W^{-1}-1))^2}=\frac{dU\,dW}{(U+W-UW)^2}.\]
Thus the Liouville measure naturally induces the invariant measure $\tilde{\mu}$ of $\tilde{F}$ on $[0,1]^2$.

\section{Proof of equidistribution}\label{Sec4}

We now set out to prove Theorem \ref{T1}. The result follows directly from the following asymptotic formula we aim to show, and which holds for all $f\in C([0,1])$:
\begin{equation}\label{asympt2}
\sum_{\omega\in Q_F(T)}\omega f(\omega)\sim\left(\frac{3}{\pi^2}\int_0^1f(x)\,dx\right)e^T.\qquad(T\rar\infty)
\end{equation}
Here and following, we write $f(x)\sim g(x)$ as $x\rar\infty$ to mean that $f(x)/g(x)$ approaches $1$ as $x\rar\infty$. We also use the notation $f(x)=O(g(x))$, or equivalenty, $f(x)\ll g(x)$, as $x\rar\infty$ for when there exist $M,N>0$ such that $|f(x)|\leq Mg(x)$ for all $x\geq N$. The expression $f(x)=O_{c}(g(x))$, or equivalently $f(x)\ll_cg(x)$, means the same thing, though in this case the constants $M$ and $N$ depend on $c$. In Section \ref{ssec:reduce}, we reduce the proof of \eqref{asympt2} to showing a similar asymptotic formula for sums of certain analytic functions over the periodic points of the Gauss map. From that point we adapt the work of Pollicott \cite{P} on a Ruelle-Perron-Frobenius operator of the Gauss map. We introduce this operator in Section \ref{ssec:transfer} and establish its nuclearity and analyticity with respect to certain parameters in Section \ref{ssec:Lnuclear}. Then in Section \ref{ssec:eta}, we utilize the Fredholm determinants of the operator to construct a certain $\eta$ function which is the Laplace transform of an approximation $\tilde{S}_f$ to a sum of the form
\[S_f(T):=\sum_{\omega\in Q_G(T)}f(\omega).\]
We then calculate the residue of a pole of the $\eta$ function and apply the Wiener-Ikehara Tauberian theorem to determine the asymptotic growth rate of $\tilde{S}_f$. We conclude the proof in Section \ref{ssec:equivalent} by showing that $\tilde{S}_f$ is in fact asymptotically equivalent to $S_f$, which uses a fact due to Kelmer \cite[Theorem 3]{Ke} that the sum of $f$ over the odd periodic continued fractions grows asymptotically slower than that over the even.

\subsection{Reduction to equidistribution of $Q_G(T)$}\label{ssec:reduce}
First, let $f\in C([0,1])$, and assume without loss of generality that $f$ is real valued and nonnegative. Then let $g:[0,1]\rar\R$ be defined by $g(x)=xf(x)$. Notice that for any $T>0$, we have
\[\sum_{\omega\in Q_F(T)}\omega f(\omega)=\sum_{[\overline{a_1,\ldots,a_{2n}}]\in Q_G(T)}\sum_{k=0}^{a_1-1}g([a_1-k,\overline{a_2,\ldots,a_{2n},a_1}])
=\sum_{\omega\in Q_G(T)}\sum_{k=0}^{\lfloor\omega^{-1}\rfloor-1}(g\circ F^k)(\omega).\]
So understanding the sum of $g$ over $Q_F(T)$ is equivalent to understanding the sum of the function $\bar{g}:(0,1]\rar\R$ defined by
\[\bar{g}(x)=\sum_{k=0}^{\lfloor x^{-1}\rfloor-1}(g\circ F^k)(x)\]
over $Q_G(T)$. A straightforward calculation reveals that
\begin{align*}
\int_{[0,1]}\bar{g}\,d\nu=\frac{1}{\log2}\int_0^1f(x)\,dx.
\end{align*}
Thus the asymptotic formula \eqref{asympt2} is equivalent to
\[\sum_{\omega\in Q_G(T)}\bar{g}(\omega)\sim\left(\frac{3\log2}{\pi^2}\int_{[0,1]}\bar{g}\,d\nu\right)e^T.\qquad(T\rar\infty)\]
If $\bar{g}$ had an extension to a function in $C([0,1])$, this asymptotic formula would follow from the work of Pollicott. However, in general, $\bar{g}(x)$ has discontinuities at the points $x\in\{n^{-1}:n\in\N,n\geq2\}$ and can grow without bound as $x\rar0^+$, and we must take these facts into account. On the other hand, the following lemma essentially shows that we can assume that $\bar{g}$ is an analytic function of a particular form.

\begin{lemma}\label{L1}
The function $\bar{g}(x)=\sum_{k=0}^{\lfloor x^{-1}\rfloor}(g\circ F^k)(x)$ can be approximated arbitrarily closely in $L^1(\nu)$ from above and below by functions of the form $p(x)=p_1(x)(1-\log x)$, where $p_1$ is a polynomial.
\end{lemma}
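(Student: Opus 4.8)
The plan is to first get a usable closed form for $\bar g$, then show it is of the asserted type up to controlled error. Writing $g(x)=xf(x)$ with $f\in C([0,1])$, I would compute $F^k(x)$ explicitly on the cylinder $\{x:\lfloor x^{-1}\rfloor = a_1\}$: there $F^k(x)=[a_1-k,a_2,a_3,\dots]$ for $0\le k\le a_1-1$, and using \eqref{CFprop1}-style formulas one has $F^k([a_1,a_2,\dots])=\dfrac{x}{1-kx}$ (this is the linear-fractional action of $\left(\begin{smallmatrix}1&0\\-k&1\end{smallmatrix}\right)$, valid precisely while $kx<1$, i.e.\ $k\le a_1-1$). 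Hence
\[
\bar g(x)=\sum_{k=0}^{\lfloor x^{-1}\rfloor-1}\frac{x}{1-kx}\,f\!\left(\frac{x}{1-kx}\right).
\]
The factor $x/(1-kx)$ telescopes nicely against the counting measure, and the key structural fact is that $\bar g$ is a sum of $\lfloor x^{-1}\rfloor$ terms each bounded by $\|f\|_\infty$; the worst term is $k=\lfloor x^{-1}\rfloor-1$, where $x/(1-kx)$ can be close to $1$, so $\bar g(x)=O(\|f\|_\infty\lfloor x^{-1}\rfloor)=O(x^{-1})$ near $0$. This already tells us $\bar g\in L^1(\nu)$ since $d\nu\asymp dx$ and $\int_0^1 x^{-1}\,dx$ diverges only logarithmically — wait, that does diverge, so one must be more careful: in fact the leading behavior is $\bar g(x)\approx f(0)\,x\sum_{k<x^{-1}}(1-kx)^{-1}\approx f(0)\,(1-\log x)$ as $x\to0^+$, which is exactly the shape $p_1(x)(1-\log x)$ with $p_1$ constant. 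This is the heuristic that the lemma is designed to capture.

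The approach for the proof proper is a two-step sandwich. First I would establish the asymptotic $\bar g(x) = c(1-\log x) + r(x)$ near $0$, where $c$ is a constant (morally $f(0)$, or rather the appropriate average) and $r$ is bounded; more robustly, I would show directly that for any $\varepsilon>0$ there is a function $p(x)=p_1(x)(1-\log x)$ with $p_1$ polynomial such that $p\le \bar g$ on $(0,1]$ and $\int_{[0,1]}(\bar g - p)\,d\nu<\varepsilon$, and symmetrically one bounding from above. To do this, cover $[0,1]$ by finitely many intervals on which $f$ varies by less than a small amount $\delta$ (uniform continuity), approximate $f$ above and below by step functions, and note that for $f$ replaced by a constant on such a piece the corresponding partial sum of $\bar g$ is an \emph{exactly} computable expression: $x\sum_{k} (1-kx)^{-1}$ restricted to the relevant range of $k$, which one recognizes via $\sum_{k=0}^{N-1}\frac{1}{1/x-k}$ as a difference of digamma values, $\psi(1/x)-\psi(1/x-N)$, and whose smooth part is $\log(1/x) + O(1) = (1-\log x)+O(1)$. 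The remaining pieces (the $O(1)$ error, the contribution away from $0$ where $\bar g$ is bounded and piecewise real-analytic) are genuine continuous/bounded functions on $[0,1]$, which can themselves be approximated from above and below in $L^1(\nu)$ by polynomials (Weierstrass) — and a polynomial is of the stated form $p_1(x)(1-\log x)$ with, say, $p_1(x)$ absorbing everything and the $(1-\log x)$ factor harmlessly present since $1-\log x\ge 1$, or one writes $q(x)=q(x)\cdot\frac{1-\log x}{1-\log x}$ and separately handles the $\log$ — the cleanest is to prove: bounded measurable $=$ (polynomial)$\cdot(1-\log x)$ up to arbitrarily small $L^1(\nu)$ error, by first dividing by $(1-\log x)$, which keeps the function bounded and continuous on any $[\epsilon,1]$ and small near $0$, then applying Weierstrass on $[\epsilon,1]$ and controlling the tail.

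Putting it together: write $\bar g = \bar g\cdot \mathbf 1_{(0,\epsilon)} + \bar g\cdot\mathbf 1_{[\epsilon,1]}$. On $[\epsilon,1]$, $\bar g$ is bounded (finitely many bounded real-analytic pieces), so dividing by the bounded-below smooth factor $(1-\log x)$ and applying Weierstrass approximation from above and below gives $p_1$ with $p_1(1-\log x)$ sandwiching $\bar g$ there up to small $L^1(\nu)$ error. On $(0,\epsilon)$, I use the digamma/step-function estimate above to get that $\bar g(x)$ lies between $(f_-^{(\epsilon)} - \delta)(1-\log x)$ and $(f_+^{(\epsilon)} + \delta)(1-\log x)$ plus a bounded error of $L^1(\nu)$-mass $O(\epsilon)$, where $f_\pm^{(\epsilon)}$ are close to the relevant local value of $f$; choosing $\epsilon$ then $\delta$ small makes all errors small. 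The main obstacle I anticipate is the bookkeeping at the junction $x=1/n$ and the precise control of the $O(1)$ term in $x\sum_k(1/x-k)^{-1} = (1-\log x) + O(1)$ \emph{uniformly} as $x\to0^+$ — i.e.\ showing this remainder is genuinely bounded (it is, being essentially $\gamma$ plus a convergent digamma correction, but the fact that $N=\lfloor x^{-1}\rfloor$ varies with $x$ means one must pair $\psi(1/x)$ with $\psi(\{1/x\})$ and check the fractional-part contribution stays bounded). Once that uniform $O(1)$ bound is in hand, everything else is routine approximation theory, and the one-sided (above/below) nature of the claim is automatic because step-function approximations of $f$ and Weierstrass approximations can both be arranged one-sided at the cost of an extra $\delta$.
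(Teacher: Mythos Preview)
Your plan is workable but considerably more laborious than the paper's, and it leaves one loose end. The paper proceeds globally: it defines $\bar{\bar g}(x):=\bar g(x)/(1-\log x)$, observes at once that $\bar{\bar g}$ is uniformly bounded on $(0,1]$ with only jump discontinuities at the points $1/n$, sandwiches $\bar{\bar g}$ from above and below by continuous functions and then by polynomials via Stone--Weierstrass, and finally multiplies back by $(1-\log x)$ while controlling the weighted $L^1$ error (splitting the integral at a small parameter to handle the logarithmic weight near $0$). The global boundedness of $\bar{\bar g}$ --- which you single out as the ``main obstacle'' --- is in fact immediate from your own formula: with $N=\lfloor x^{-1}\rfloor$ one has $|\bar g(x)|\le \|f\|_\infty\sum_{k=0}^{N-1}\frac{x}{1-kx}=\|f\|_\infty\sum_{j=0}^{N-1}\frac{1}{\{x^{-1}\}+1+j}\le \|f\|_\infty(1-\log x)$, no digamma asymptotics needed. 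Once you have this, the separate treatment of $(0,\epsilon)$ and $[\epsilon,1]$ is unnecessary, and in your version it creates a problem you do not address: the polynomial $p_1$ produced by Weierstrass on $[\epsilon,1]$ need not satisfy $p_1(x)(1-\log x)\le \bar g(x)$ on $(0,\epsilon)$, so the two one-sided approximations must be glued into a single global one-sided polynomial approximant, and you give no mechanism for this. The paper's global division sidesteps the issue entirely.
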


\begin{proof}
By the bounds $(2\log2)^{-1}\leq d\nu/dm\leq(\log2)^{-1}$ on the Radon-Nikodym derivative $d\nu/dm=((1+x)\log2)^{-1}$, it suffices to prove the approximation in the $L^1$ norm with respect to the Lebesgue measure. It is clear that $\bar{\bar{g}}(x):=\frac{\bar{g}(x)}{1-\log x}$ is uniformly bounded on $(0,1]$, and it is also continuous except possibly at the points $\{n^{-1}:n\in\N,n\geq2\}$, where there could be jump discontinuities. For any $\epsilon>0$, one can clearly find continuous functions $h_{1,\epsilon},h_{2,\epsilon}:[0,1]\rar\R$ such that $h_{1,\epsilon}\leq\bar{\bar{g}}\leq h_{2,\epsilon}$, $\|h_{1,\epsilon}\|_\infty,\|h_{2,\epsilon}\|_\infty\leq\|\bar{\bar{g}}\|_\infty$,
\[\int_0^1(\bar{\bar{g}}(x)-h_{1,\epsilon}(x))\,dx\leq\epsilon,\quad\text{and}\quad\int_0^1(h_{2,\epsilon}(x)-\bar{\bar{g}}(x))\,dx\leq\epsilon.\]
Then by the Stone-Weierstrass theorem, there exist polynomials $p_{1,\epsilon},p_{2,\epsilon}$ such that $h_{1,\epsilon}(x)-\epsilon\leq p_{1,\epsilon}(x)\leq h_{1,\epsilon}(x)$ and $h_{2,\epsilon}(x)\leq p_{2,\epsilon}(x)\leq h_{2,\epsilon}(x)+\epsilon$ for $x\in[0,1]$. We then have
\begin{align*}
&\int_0^1[\bar{g}(x)-p_{1,\epsilon}(x)(1-\log x)]\,dx=\int_0^1(\bar{\bar{g}}(x)-p_{1,\epsilon}(x))(1-\log x)\,dx\\
&\qquad=\int_0^1(\bar{\bar{g}}(x)-h_{1,\epsilon}(x))(1-\log x)\,dx+\int_0^1(h_{1,\epsilon}(x)-p_{1,\epsilon}(x))(1-\log x)\,dx\\
&\qquad\leq\int_0^\epsilon2\|\bar{\bar{g}}\|_\infty(1-\log x)\,dx+(1+\log\epsilon^{-1})\int_\epsilon^1(\bar{\bar{g}}(x)-h_{1,\epsilon}(x))\,dx+\int_0^1(1-\log x)\,dx\\
&\qquad\leq2\|\bar{\bar{g}}\|_\infty\epsilon(2-\log\epsilon)+\epsilon(1+\log\epsilon^{-1})+2\epsilon.
\end{align*}
The last expression above approaches $0$ as $\epsilon\rar0$. Thus, we can conclude that $\bar{g}(x)$ can be approximated arbitrarily closely in $L^1$ from below by functions of the desired form. Similarly, using $p_{2,\epsilon}$, one can show the approximation of $\bar{g}(x)$ by such functions from above.
\end{proof}

By this approximation result, we have reduced the proof to showing that
\[S_f(T)=\sum_{\omega\in Q_G(T)}f(\omega)\sim\left(\frac{3\log2}{\pi^2}\int_{[0,1]}f\,d\nu\right)e^T.\qquad(T\rar\infty)\]
for functions $f:(0,1]\rar\R$ of the form $f(x)=p(x)(1-\log x)$ where $p$ is a polynomial.

\subsection{The Ruelle-Perron-Frobenius operator}\label{ssec:transfer}
 We now begin following \cite{P}, as well as \cite{F} and \cite{Ke}, in proving the above growth rate by analyzing a Ruelle-Perron-Frobenius operator of the Gauss map. For $r>0$, let $D_r=\{z\in\C:|z-1|<r\}$ and $\Dbar_r$ the closure of $D_r$, and let $a,b\in\R$ be any fixed constants satisfying $1<a<b<3/2$. The Ruelle-Perron-Frobenius operator we define below acts on the disk algebra, which we denote by $\A$ and view as the set of continuous functions on $\Dbar_a$ which are analytic in $D_a$, equipped with the supremum norm $\|\cdot\|_\infty$. Let $f$ be a complex-valued function of the form $f(z)=f_1(z)(1-\log z)$, where $\log z$ is the principal branch of the logarithm and $f_1$ is analytic in an open neighborhood of $\overline{D}_1$ and real valued and positive on $[0,1]$. Then $f$ is analytic in an open neighborhood of $\overline{D}_1\bsl\{0\}$, and letting $K=\|f_1\|_\infty(1+\pi)$, we have $|f(z)|\leq K(1-\log|z|)$ for all $z\in\overline{D}_1\bsl\{0\}$. Then letting $\chi_{s,\omega}(z):=z^{2s}e^{\omega f(z)}$ for $s,\omega\in\C$ (the $2s$ power coming from the principle branch of the logarithm), we define the Ruelle-Perron-Frobenius operator $L_{s,\omega}:\A\rar\A$ by
\[(L_{s,\omega}g)(z)=\sum_{n=1}^\infty g\left(\frac{1}{z+n}\right)\chi_{s,\omega}\left(\frac{1}{z+n}\right).\]
Letting also $\mathcal{U}:=\{(s,\omega)\in\C^2:\Rl(s)>(1+K|\omega|)/2\}$, we see that $L_{s,\omega}$ is a well-defined and bounded operator for $(s,\omega)\in\mathcal{U}$ (with $s=\sigma+it$ for $\sigma,t\in\R$) by the calculation
\begin{align}
\sum_{n=1}^\infty\left|g\left(\frac{1}{z+n}\right)\chi_{s,\omega}\left(\frac{1}{z+n}\right)\right|&\leq\|g\|_\infty\sum_{n=1}^\infty\left|\frac{1}{(z+n)^{2s}}\right|\left|e^{\omega f((z+n)^{-1})}\right|\nonumber\\
&\leq\|g\|_\infty\sum_{n=1}^\infty\left(\frac{e^{2\pi t}}{|z+n|^{2\sigma}}\right)\left(e^{K|\omega|}|z+n|^{K|\omega|}\right)\nonumber\\
&\leq\|g\|_\infty\sum_{n=1}^\infty\frac{e^{2\pi t+K|\omega|}}{(n-\frac{1}{2})^{2\sigma-K|\omega|}}.\label{normcalc1}
\end{align}

\subsection{$L_{s,\omega}$ is nuclear of order $0$ and analytic}\label{ssec:Lnuclear}
We now closely follow the arguments of Faivre \cite{F} to show that for $(s,\omega)\in\mathcal{U}$, $L_{s,\omega}$ is a nuclear operator of order $0$, and is analytic in $(s,\omega)$. We begin with nuclearity.

For a given $\epsilon>0$, we wish to find a sequence $\{\Lambda_j\otimes e_j\}_{j=0}^\infty\subseteq\A^*\otimes\A$ such that
\[L_{s,\omega}=\sum_{j=0}^\infty\Lambda_j\otimes e_j,\quad\text{and}\quad\sum_{j=0}^\infty\|\Lambda_j\|^\epsilon\|e_j\|_\infty^\epsilon<\infty,\]
where $\Lambda_j\otimes e_j$ is defined as an operator on $\A$ by $((\Lambda_j\otimes e_j)g)(z)=(\Lambda_jg)e_j(z)$.
Assume $(s,\omega)\in\mathcal{U}$ so that $\sigma>\frac{1+K|\omega|}{2}$, fix $g\in\A$, and for each $n\in\N$ let
\[(L_{n,s,\omega}g)(z):=h_{g,n,s,\omega}(z):=g\left(\frac{1}{z+n}\right)\chi_{s,\omega}\left(\frac{1}{z+n}\right).\]
It is easy to see that $h_{g,n,s,\omega}$ is an analytic function on $D_{3/2}$, and so for $z\in\Dbar_a\subseteq D_b$,
\begin{align*}
h_{g,n,s,\omega}(z)=\sum_{j=0}^\infty\frac{h_{g,n,s,\omega}^{(j)}(1)}{j!}(z-1)^j.
\end{align*}
Define the element $\Lambda_{n,j,s,\omega}\in\A^*$ by
\[\Lambda_{n,j,s,\omega}g:=\frac{h_{g,n,s,\omega}^{(j)}(1)}{j!}=\frac{1}{2\pi i}\int\limits_{|\zeta-1|=b}\frac{h_{g,n,s,\omega}(\zeta)}{(\zeta-1)^{j+1}}d\zeta;\]
the latter equality follows from Cauchy's formula. Also define $e_j\in\A$ by $e_j(z)=(z-1)^j$ so that
\[h_{g,n,s,\omega}=\sum_{j=0}^\infty(\Lambda_{n,j,s,\omega}g)e_j.\]
By the calculation \eqref{normcalc1}, we have
\[\|h_{g,n,s,\omega}\|_\infty\leq\frac{\|g\|_\infty e^{2\pi t+K|\omega|}}{(n-\frac{1}{2})^{2\sigma-K|\omega|}},\]
and therefore
\begin{align*}
|\Lambda_{n,j,s,\omega}g|\leq\frac{\|h_{g,n,s,\omega}\|_\infty}{2\pi}\int\limits_{|\zeta-1|=b}\frac{1}{|\zeta-1|^{j+1}}|d\zeta|=\frac{\|g\|_\infty e^{2\pi t+K|\omega|}}{b^j(n-\frac{1}{2})^{2\sigma-K|\omega|}}.
\end{align*}
This implies that
\[\sum_{n=1}^\infty\|\Lambda_{n,j,s,\omega}\|\leq\sum_{n=1}^\infty\frac{e^{2\pi t+K|\omega|}}{b^j(n-\frac{1}{2})^{2\sigma-K|\omega|}},\]
which is a finite quantity, and we denote its product with $b^j$ by $\kappa(s,\omega)$. Hence
\[\Lambda_{j,s,\omega}:=\sum_{n=1}^\infty\Lambda_{n,j,s,\omega}\]
is a well defined element of $\A^*$.

Next, notice that
\[\sum_{j=0}^\infty\sum_{n=1}^\infty\|\Lambda_{n,j,s,\omega}\|\|e_j\|_\infty
\leq\sum_{j=0}^\infty\sum_{n=1}^\infty\frac{e^{2\pi t+K|\omega|}}{b^j(n-\frac{1}{2})^{2\sigma-K|\omega|}}a^j
=\kappa(s,\omega)\sum_{j=0}^\infty\left(\frac{a}{b}\right)^j
=\frac{\kappa(s,\omega)}{1-(a/b)}<\infty.\]
As a result, we have
\begin{align*}
L_{s,\omega}&=\sum_{n=1}^\infty L_{n,s,\omega}=\sum_{n=1}^\infty\sum_{j=0}^\infty\Lambda_{n,j,s,\omega}\otimes e_j=\sum_{j=0}^\infty\Lambda_{j,s,\omega}\otimes e_j.
\end{align*}
Finally, note that for all $\epsilon>0$,
\[\sum_{j=0}^\infty\|\Lambda_{j,s,\omega}\|^\epsilon\|e_j\|_\infty^\epsilon\leq\sum_{j=0}^\infty\frac{\kappa(s,\omega)^\epsilon}{b^{\epsilon j}}a^{\epsilon j}=\frac{\kappa(s,\omega)^\epsilon}{1-(a/b)^\epsilon}<\infty.\]
This completes the proof that $L_{s,\omega}$ is nuclear of order $0$.

We now prove the analyticity of $L_{s,\omega}$, that is, that $(s,\omega)\mapsto L_{s,\omega}$ is analytic as a map from $\mathcal{U}$ to the order $\epsilon$ nuclear operators on $\A$ for any $\epsilon>0$. We only show the proof for the analyticity with respect to $s$ since the proof for $\omega$ is essentially the same. Throughout, we fix $(s_0,\omega_0)\in\mathcal{U}$ at which we prove the differentiability of $L_{s,\omega_0}$ with respect to $s$. Also, let $s_0=\sigma_0+it_0$ for $\sigma_0,t_0\in\R$.

For our first step, we fix $n,j\in\N\cup\{0\}$ with $n\geq1$, and show that $(s,\omega)\mapsto\Lambda_{n,j,s,\omega}$ is differentiable (as a map from $\mathcal{U}$ to $\A^*$) at $(s_0,\omega_0)$ with respect to $s$. Define the element $\Theta_{n,j,s,\omega}\in\A^*$ by
\[\Theta_{n,j,s,\omega}g=\frac{1}{2\pi i}\int\limits_{|\zeta-1|=b}\frac{-2h_{g,n,s,\omega}(\zeta)\log(\zeta+n)}{(\zeta-1)^{j+1}}d\zeta.\]
We aim to show that $\Theta_{n,j,s,\omega}=\frac{\partial}{\partial s}\Lambda_{n,j,s,\omega}$. For $(s,\omega_0)\in\mathcal{U}$ and $g\in\A$, we have
\begin{align}
&\left|\left(\frac{\Lambda_{n,j,s,\omega_0}-\Lambda_{n,j,s_0,\omega_0}}{s-s_0}-\Theta_{n,j,s_0,\omega_0}\right)g\right|\nonumber\\
&\qquad=\left|\frac{1}{2\pi i}\int\limits_{|\zeta-1|=b}\frac{g((\zeta+n)^{-1})e^{\omega_0f((\zeta+n)^{-1})}}{(\zeta-1)^{j+1}(\zeta+n)^{2s_0}}
\left(\frac{(\zeta+n)^{-2(s-s_0)}-1}{s-s_0}+2\log(\zeta+n)\right)d\zeta\vphantom{\int\limits_{|\zeta-1|=b}}\right|.\label{appeq1}
\end{align}
Notice that for a function $\psi$ that is analytic in an open neighborhood $N$ of $0$, and for $h\in\C$ such that the straight line segment $[0,h]$ connecting $0$ and $h$ is in $N$, we have
\begin{align}
|\psi(h)-\psi(0)-h\psi'(0)|&=\left|\int_{[0,h]}\psi(u)\,du-h\psi'(0)\right|
=\left|\int_{[0,h]}(\psi'(u)-\psi'(0))du\right|\nonumber\\
&=\left|\int_{[0,h]}\frac{\psi'(u)-\psi'(0)}{u}u\,du\right|
\leq\int_{[0,h]}\max_{u'\in[0,u]}|\psi''(u')||u||du|\nonumber\\
&\leq\left(\max_{u\in[0,h]}|\psi''(u)|\right)\int_0^{|h|}x\,dx
=\frac{|h|^2}{2}\max_{u\in[0,h]}|\psi''(u)|.\label{appeq2}
\end{align}
This implies that
\begin{align*}
&\left|\frac{(\zeta+n)^{-2(s-s_0)}-1}{s-s_0}+2\log(\zeta+n)\right|\leq\frac{|s-s_0|}{2}\max_{u\in[0,s-s_0]}\left|\frac{4(\log(\zeta+n))^2}{(\zeta+n)^{2u}}\right|\\
&\qquad\qquad\leq2|s-s_0|(\pi+\log(n+1+b))^2e^{2\pi|\Ig(s-s_0)|}(n+1+b)^{2|\Rl(s-s_0)|}.
\end{align*}
Hence, \eqref{appeq1} is at most
\[\frac{2(\pi+\log(n+1+b))^2e^{2\pi(t_0+|\Ig(s-s_0)|)+K|\omega_0|}(n+1+b)^{2|\Rl(s-s_0)|}}{b^j(n-\frac{1}{2})^{2\sigma_0-K|\omega_0|}}|s-s_0|\|g\|_\infty.\]
For ease of notation, we let $\Delta_n(s,\omega,s_0,\omega_0)>0$ be the constant such that the bound above equals $\Delta_n(s,\omega,s_0,\omega_0)b^{-j}|s-s_0|\|g\|_\infty$. Then
\begin{align*}
&\left\|\frac{\Lambda_{n,j,s,\omega_0}-\Lambda_{n,j,s_0,\omega_0}}{s-s_0}-\Theta_{n,j,s_0,\omega_0}\right\|\leq\frac{\Delta_n(s,\omega,s_0,\omega_0)}{b^j}|s-s_0|,
\end{align*}
which approaches $0$ as $s\rar s_0$. This proves that $\Theta_{n,j,s,\omega}=\frac{\partial}{\partial s}\Lambda_{n,j,s,\omega}$.

Next, notice that
\begin{align*}
\sum_{n=1}^\infty\|\Theta_{n,j,s,\omega}\|\leq\sum_{n=1}^\infty\frac{e^{2\pi t+K|\omega|}2(\pi+\log(n+1+b))}{b^j(n-\frac{1}{2})^{2\sigma-K|\omega|}},
\end{align*}
which is finite for $(s,\omega)\in\mathcal{U}$; and hence
\[\Theta_{j,s,\omega}:=\sum_{n=1}^\infty\Theta_{n,j,s,\omega}\]
is a well defined element of $\A^*$ for all $j\in\N\cup\{0\}$. We then have
\begin{align*}
&\left\|\frac{\Lambda_{j,s,\omega_0}-\Lambda_{j,s_0,\omega_0}}{s-s_0}-\Theta_{j,s_0,\omega_0}\right\|\leq\sum_{n=1}^\infty\frac{\Delta_n(s,\omega,s_0,\omega_0)}{b^j}|s-s_0|,
\end{align*}
which is finite as long as $s$ is close enough to $s_0$ so that $2\sigma_0-K|\omega_0|-2|\Rl(s-s_0)|>1$. Such $s$ comprise an open neighborhood of $s_0$ since $2\sigma_0-K|\omega_0|>1$. It is thus clear that as $s\rar s_0$, the left side above approaches $0$. This proves that $\frac{\partial}{\partial s}\Lambda_{j,s,\omega}$ exists and equals $\Theta_{j,s,\omega}$.

Next fix $\epsilon>0$. Then notice that
\begin{align*}
\sum_{j=0}^\infty\|\Theta_{j,s,\omega}\|^\epsilon\|e_j\|^\epsilon&\leq\sum_{j=0}^\infty\left(\sum_{n=1}^\infty\frac{e^{2\pi t+K|\omega|}2(\pi+\log(n+1+b))}{b^j(n-\frac{1}{2})^{2\sigma-K|\omega|}}\right)^\epsilon a^{j\epsilon}\\
&=\left(\sum_{n=1}^\infty\frac{e^{2\pi t+K|\omega|}2(\pi+\log(n+1+b))}{(n-\frac{1}{2})^{2\sigma-K|\omega|}}\right)^\epsilon\frac{1}{1-(a/b)^\epsilon}<\infty,
\end{align*}
for $(s,\omega)\in\mathcal{U}$, implying that
\[\Theta_{s,\omega}:=\sum_{j=0}^\infty\Theta_{j,s,\omega}\otimes e_j\]
is a nuclear operator of order $\epsilon$. We also have
\begin{align*}
\sum_{j=0}^\infty\left\|\frac{\Lambda_{j,s,\omega_0}-\Lambda_{j,s_0,\omega_0}}{s-s_0}-\Theta_{j,s_0,\omega_0}\right\|^\epsilon\|e_j\|^\epsilon&
\leq\sum_{j=0}^\infty\left(\sum_{n=1}^\infty\frac{\Delta_n(s,\omega,s_0,\omega_0)}{b^j}|s-s_0|\right)^\epsilon a^{j\epsilon},\\
&=\frac{|s-s_0|^\epsilon}{1-(a/b)^\epsilon}\left(\sum_{n=1}^\infty\Delta_n(s,\omega,s_0,\omega_0)\right)^\epsilon,
\end{align*}
which again is finite as long as $2\sigma_0-K|\omega_0|-2|\Rl(s-s_0)|>1$, and approaches $0$ as $s\rar s_0$. This means that
\[\frac{L_{s,\omega_0}-L_{s_0,\omega_0}}{s-s_0}-\Theta_{s_0,\omega_0}=\sum_{j=0}^\infty\left(\frac{\Lambda_{j,s,\omega_0}-\Lambda_{j,s_0,\omega_0}}{s-s_0}-\Theta_{j,s_0,\omega_0}\right)\otimes e_j\]
approaches $0$ in the norm on order $\epsilon$ nuclear operators. This completes the proof that $(s,\omega)\mapsto L_{s,\omega}$ is analytic with respect to $s$ as a map to the order $\epsilon$ nuclear operators ($\forall\epsilon>0$) and $\frac{\partial}{\partial s}L_{s,\omega}=\Theta_{s,\omega}$. As mentioned above, one can similarly prove that $L_{s,\omega}$ is analytic with respect to $\omega$ as well.

\subsection{The $\eta$-function}\label{ssec:eta}
By the work of Grothendieck \cite{G1,G2} on the theory of Fredholm determinants of nuclear operators on Banach spaces, the functions
\[Z_\pm(s,\omega):=\det(I\pm L_{s,\omega})=\prod_{\lambda_{s,\omega}\in\spec(L_{s,\omega})}(1\pm\lambda_{s,\omega}),\]
where $\spec(L_{s,\omega})$ is the set of eigenvalues of $L_{s,\omega}$ (counted with multiplicity), are well-defined and analytic on $\mathcal{U}$. Furthermore, the product over the eigenvalues converges absolutely. By \cite[Proposition 3.4.]{F}, if $\Rl(s)>1$ or $s=1+it$ with $t\in\R\bsl\{0\}$, then the spectral radius of $L_{s,0}$ is less than $1$, and hence there is an open neighborhood $\mathcal{V}$ of $\{(s,0)\in\C^2:\Rl(s)\geq1,s\neq1\}$ in $\mathcal{U}$ such that the spectral radius of $L_{s,\omega}$ is less than $1$ for all $(s,\omega)\in\mathcal{V}$. Thus, for $(s,\omega)\in\mathcal{V}$, $Z_\pm(s,\omega)\neq0$ and
\begin{align*}
Z_\pm(s,\omega)&=\exp\left(\sum_{\lambda_{s,\omega}\in\sigma(L_{s,\omega})}\log(1\pm\lambda_{s,\omega})\right)
=\exp\left(\sum_{\lambda_{s,\omega}\in\sigma(L_{s,\omega})}\sum_{n=1}^\infty\frac{(-1)^{n-1}}{n}(\pm\lambda_{s,\omega})^n\right)\\
&=\exp\left(-\sum_{n=1}^\infty\frac{(\mp1)^n}{n}\Tr(L_{s,\omega}^n)\right).
\end{align*}
Assuming that $\Rl(s)>1$ and following \cite{M} (see also \cite[Theorem 3.3]{F} for a detailed argument which can readily be applied to our situation), we have
\[\Tr(L_{s,\omega}^n)=\sum_{a_1,\ldots,a_n=1}^\infty\frac{\prod_{j=1}^n\chi_{s,\omega}(G^j([\overline{a_1,\ldots,a_n}]))}{1-(-1)^n\prod_{j=1}^nG^j([\overline{a_1,\ldots,a_n}])^2}.\]
It follows that if $\zeta_\pm(s,\omega):=Z_\pm(s+1,\omega)/Z_\mp(s,\omega)$, then
\begin{align*}
\zeta_+(s,\omega)&=\exp\left(\sum_{n=1}^\infty\frac{1}{n}\sum_{a_1,\ldots,a_n=1}^\infty\prod_{j=1}^n\chi_{s,\omega}(G^j([\overline{a_1,\ldots,a_n}]))\right),\\
\zeta_-(s,\omega)&=\exp\left(\sum_{n=1}^\infty\frac{(-1)^n}{n}\sum_{a_1,\ldots,a_n=1}^\infty\prod_{j=1}^n\chi_{s,\omega}([G^j(\overline{a_1,\ldots,a_n}]))\right),
\end{align*}
and hence
\begin{align*}
\eta(s,\omega)&:=\frac{1}{2}\log(\zeta_+(s,\omega)\zeta_-(s,\omega))=\sum_{n=1}^\infty\frac{1}{2n}\sum_{a_1,\ldots,a_{2n}=1}^\infty\prod_{j=1}^{2n}\chi_{s,\omega}(G^j[\overline{a_1,\ldots,a_{2n}}]).
\end{align*}
Taking the derivative of $\eta$ with respect to $\omega$ and setting $\omega=0$ yields the function
\begin{align*}
\hat{\eta}(s)&:=\sum_{n=1}^\infty\frac{1}{2n}\sum_{a_1,\ldots,a_{2n}=1}^\infty\left(\sum_{j=1}^{2n}f(G^j([\overline{a_1,\ldots,a_{2n}}]))\right)e^{-s\ell(a_1,\ldots,a_{2n})}
\end{align*}

For a given tuple $a=(a_1,\ldots,a_n)\in\N^n$ of any length, we define $\per(a)$ be the length of the minimal period in the periodic continued fraction $[\overline{a}]$, which is the number of distinct tuples of length $n$ that one can cyclically permute to obtain $a$. Then letting $n\in\N$ be fixed, the term $f([\overline{a'}])e^{-s\ell(a')}$ corresponding to a given $a'\in\N^{2n}$ appears $\per(a')$ times in the sum over $a_1,\ldots,a_{2n}$ and $j$ in the definition of $\hat{\eta}$. So we can combine terms to get
\[\hat{\eta}(s)=\sum_{n=1}^\infty\sum_{a\in\N^{2n}}\frac{\per(a)}{2n}f([\overline{a}])e^{-s\ell(a)}.\]
This establishes $\hat{\eta}$ as the Laplace transform
\[\int_{0}^\infty e^{-st}\,d\tilde{S}_f(t)\]
of the function
\[\tilde{S}_f(T)=\sum_{n=1}^\infty\sum_{\substack{a\in\N^{2n}\\\ell(a)\leq T}}\frac{\per(a)}{2n}f([\overline{a}]).\]

Recall that $\hat{\eta}(s)$ is the $\omega$-derivative of $\eta(s,\omega)$ at $\omega=0$, and hence $\hat{\eta}$ extends analytically to a neighborhood of $\{s\in\C:\Rl(s)\geq1\}\bsl\{1\}$. In this section, we see that $s=1$ is a simple pole of $\hat{\eta}$ and calculate its corresponding residue, which allows us to determine the asymptotic growth rate of $\tilde{S}_f$.

A fact due to Wirsing \cite{W} is that $1$ is the maximal eigenvalue of $L_{1,0}$, and all other eigenvalues have modulus less than $0.31$. By analytic perturbation theory (see \cite{Ka}), there is a neighborhood $\mathcal{W}$ of $(1,0)$ such that for $(s,\omega)\in\mathcal{W}$, the maximal eigenvalue $\lambda_1(s,\omega)$ of $L_{s,\omega}$ is analytic in $(s,\omega)$ and the lesser eigenvalues of $L_{s,\omega}$ are of modulus less than $1$. So for $(s,\omega)\in\mathcal{V}\cap\mathcal{W}$,
\[\eta(s,\omega)=-\frac{1}{2}\log(1-\lambda_1(s,\omega)^2)+\Phi(s,\omega),\]
where $\Phi$ is analytic in $\mathcal{W}$. Hence, for $(s,0)\in\mathcal{V}\cap\mathcal{W}$,
\[\hat{\eta}(s)=\frac{\lambda_1(s,0)}{1-(\lambda_1(s,0))^2}\frac{\partial\lambda_1}{\partial\omega}(s,0)+\frac{\partial\Phi}{\partial\omega}(s,0),\]
which then analytically extends the domain of $\hat{\eta}(s)$ to the set of $s\in\C$ such that $(s,0)\in\mathcal{W}$ and $\lambda_1(s,0)\neq1$. So if $\frac{\partial\lambda_1}{\partial s}(1,0)\neq0$, then $\hat{\eta}(s)$ has a simple pole at $s=1$ with residue
\[\frac{-\frac{\partial\lambda_1}{\partial\omega}(1,0)}{2\frac{\partial\lambda_1}{\partial s}(1,0)}.\]
From the proof of \cite[Proposition 2]{P} and \cite[Theorem 3.6]{F}, it follows that $-\frac{\partial\lambda_1}{\partial s}(1,0)$ is the entropy of the Gauss map, which is $\frac{\pi^2}{6\log2}$; and \cite{P} also establishes that $\frac{\partial\lambda_1}{\partial\omega}(1,0)=\int_{[0,1]}f\,d\nu$.

We prove the latter assertion in detail, taking into account the fact that $f(z)$ can have an asymptote at $z=0$. Note first that if $\omega\in\R$, then it is clear that $\Tr(L_{1,\omega}^n)>0$ for all $n\in\N$, and hence $\lambda_1(1,\omega)$ is real and positive. Therefore, $\lambda_1(1,\omega)$ can be calculated as the spectral radius of $L_{1,\omega}$ for $\omega\in\R$. So letting $\omega\in\R$ and following \cite[Theorem 3.6]{F}, we have
\begin{align*}
&\log\lambda_1(1,\omega)=\lim_{n\rar\infty}\frac{1}{n}\log\|L_{1,\omega}^n\|\\
&\qquad\geq\lim_{n\rar\infty}\frac{1}{n}\log\left(\sum_{a_1,\ldots,a_n=1}^\infty\left(\prod_{j=1}^n[a_j,\ldots,a_n]^2\right)\exp\left(\omega\sum_{j=1}^nf([a_j,\ldots,a_n])\right)\right)
\end{align*}
By the properties \eqref{prodprop} and \eqref{cfgapmeas}, we have
\[\prod_{j=1}^n[a_j,\ldots,a_n]^2=\frac{1}{q_{1,n}^2}\geq\frac{1}{q_{1,n}(q_{1,n}+q_{1,n-1})}=m(\llbr a_1,\ldots,a_n\rrbr).\]
This and the concavity of $\log$ yield
\begin{align*}
\log\lambda_1(1,\omega)&\geq\lim_{n\rar\infty}\frac{1}{n}\log \left(\sum_{a_1,\ldots,a_n=1}^\infty m(\llbracket a_1,\ldots,a_n\rrbracket)\exp\left(\omega\sum_{j=1}^nf([a_j,\ldots,a_n])\right)\right)\\
&\geq\omega\lim_{n\rar\infty}\frac{1}{n}\sum_{j=1}^n\sum_{a_1,\ldots,a_n=1}^\infty m(\llbracket a_1,\ldots,a_n\rrbracket)\cdot f([a_j,\ldots,a_n]).
\end{align*}
Next, notice that
\begin{align*}
f([a_j,\ldots,a_n])&=\frac{1}{m(\llbracket a_1,\ldots,a_n\rrbracket)}\int_{\llbracket a_1,\ldots,a_n\rrbracket}(f\circ G^{j-1})\,dm
+O\left(\left\|f'\big|_{\llbracket a_j,\ldots,a_n\rrbracket}\right\|_\infty\cdot m(\llbracket a_j,\ldots,a_n\rrbracket)\right);
\end{align*}
and thus
\begin{align}
\log\lambda_1(1,\omega)
&\geq\omega\lim_{n\rar\infty}\left(\int_{[0,1]}\frac{1}{n}\sum_{j=1}^n(f\circ G^{j-1})\,dm+O\left(\sup_{a_1,\ldots,a_n}\frac{1}{n}\sum_{j=1}^n\frac{\|f'|_{\llbracket a_j,\ldots,a_n\rrbracket}\|_\infty}{q_{j,n}(q_{j,n}+q_{j,n-1})}\right)\right)\nonumber\\
&=\omega\int_0^1f\,d\nu+O\left(\omega\lim_{n\rar\infty}\frac{1}{n}\sup_{a_1,\ldots,a_n}\sum_{j=1}^n\frac{\|f'|_{\llbracket a_j,\ldots,a_n\rrbracket}\|_\infty}{q_{j,n}(q_{j,n}+q_{j,n-1})}\right),\label{asympt3}
\end{align}
where we used the von Neumann ergodic theorem \cite{vN} and the ergodicity of $G$ to derive the equality.

We wish to prove that the error term in \eqref{asympt3} is equal to $0$. To do so, note that by the definition of $f$, there exists $C>0$ such that $|f'(z)|\leq\frac{C}{|z|}$ for $z\in\overline{D}_1\bsl\{0\}$, and so $\|f'|_{\llbracket a_j,\ldots,a_n\rrbracket}\|_\infty\leq C(a_j+1)$. From Section \ref{ssec:cfracs}, we have the property $q_{j,n}=a_jq_{j+1,n}+q_{j+2,n}$, and as a result,
\[\sum_{j=1}^n\frac{\|f'|_{\llbracket a_j,\ldots,a_n\rrbracket}\|_\infty}{q_{j,n}(q_{j,n}+q_{j,n-1})}\leq\sum_{j=1}^n\frac{2C}{q_{j,n}}.\]
Now for any $a_1,\ldots,a_n\in\N$, $q_{j,n}\geq F_{n-j+1}$, where $\{F_j\}_{j=1}^\infty$ is the Fibonacci sequence with $F_1=F_2=1$; and since the Fibonacci sequence grows at an exponential rate, we can say that
\[\sup_{n}\sup_{a_1,\ldots,a_n}\sum_{j=1}^n\frac{2C}{q_{j,n}}\leq\sum_{j=1}^\infty\frac{2C}{F_j}<\infty.\]
This proves that
\[\lim_{n\rar\infty}\frac{1}{n}\sup_{a_1,\ldots,a_n}\sum_{j=1}^n\frac{\|f'|_{\llbracket a_j,\ldots,a_n\rrbracket}\|_\infty}{q_{j,n}(q_{j,n}+q_{j,n-1})}=0,\]
and hence
\[\log \lambda_1(1,\omega)-\omega\int_{[0,1]}f\,d\nu\geq0\]
for $(1,\omega)\in\mathcal{U}$. Since $\mathcal{U}$ contains an open neighborhood of $(1,0)$, and thus the above holds for $\omega$ in an open neighborhood of $0$, and the expression on the left is equal to $0$ when $\omega=0$, the expression's derivative at $\omega=0$ must vanish. So
\[\frac{\frac{\partial\lambda_1}{\partial\omega}(1,0)}{\lambda_1(1,0)}-\int_{[0,1]}f\,d\nu=0;\]
and since $\lambda_1(1,0)=1$, we have
\[\frac{\partial\lambda_1}{\partial\omega}(1,0)=\int_{[0,1]}f\,d\nu.\]

We have therefore established that the function $\hat{\eta}(s)$ has a pole at $s=1$ with residue $\frac{3\log2}{\pi^2}\int_0^1f\,d\nu$.
Then by the Wiener-Ikehara tauberian theorem \cite[Section III, Theorem 4.2]{Ko}, we have
\[\tilde{S}_f(T)\sim\left(\frac{3\log2}{\pi^2}\int_0^1f\,d\nu\right)e^T.\qquad(T\rar\infty)\]

\subsection{$S_f(T)$ and $\tilde{S}_f(T)$ are asymptotically equivalent}\label{ssec:equivalent}
We first rewrite the sum defining $\tilde{S}_f(T)$ in terms of the periodic points of $G$. For this we need to distinguish between periodic continued fractions of even and odd period by defining the sets
\begin{align*}
Q_{G,\text{even}}(T)&=\{[\overline{a}]:a\in\N^{2n},\,n\in\N,\,\per(a)=2n,\,\ell(a)\leq T\},\\
Q_{G,\text{odd}}(T)&=\{[\overline{a}]:a\in\N^{2n},\,n\in\N\text{ is odd},\,\per(a)=n,\,\ell(a)\leq T\}.
\end{align*}
Now let $a\in\N^{2n}$, with $\ell(a)\leq T$, be a tuple represented in the sum defining $\tilde{S}_f(T)$. Then by the definition of $\per(a)$, $a$ is the concatenation of the tuple $(a_1,\ldots,a_{\per(a)})$ with itself $2n/\per(a)$ times. If $\per(a)$ is even, let $k=2n/\per(a)$, and if $\per(a)$ is odd, let $k=n/\per(a)$. Then letting $\omega=[\overline{a}]$, we have $\ell(a)=k\ell(\omega)$ whether $\per(a)$ is even or odd. Thus, to a given tuple $a$ in the sum defining $\tilde{S}_f(T)$, we have associated elements $\omega\in Q_G$ and $k\in\N$ such that $\ell(a)=k\ell(\omega)$. So we can rewrite $\tilde{S}_f(T)$ as follows. First define
\begin{equation}\label{Sbar}
\bar{S}_f(T)=\sum_{\omega\in Q_{G,\text{even}}(T)}f(\omega)+\frac{1}{2}\sum_{\omega\in Q_{G,\text{odd}}(T)}f(\omega).
\end{equation}
We then have
\[\tilde{S}_f(T)=\sum_{k=1}^{\lfloor T/\ell_0\rfloor}\frac{1}{k}\bar{S}_f(T/k),\]
where $\ell_0$ is the length of the shortest closed geodesic in $T_1\mathcal{M}$.  Noting that $f$ is real valued and positive on $(0,1]$, we see that $\bar{S}_f(T)\ll\tilde{S}_f(T)\ll e^T$ as $T\rar\infty$, and hence
\[\sum_{k=2}^{\lfloor T/\ell_0\rfloor}\bar{S}_f(T/k)\ll Te^{T/2}.\qquad(T\rar\infty)\]
This yields
\begin{equation}\label{asymptotic1}
\bar{S}_f(T)\sim\tilde{S}_f(T)\sim\left(\frac{3\log2}{\pi^2}\int_0^1f\,d\nu\right)e^T.
\end{equation}

To complete the proof, by \eqref{Sbar} it suffices to establish that
\[\sum_{\omega\in Q_{G,\text{odd}}(T)}f(\omega)\ll e^{T/2}.\]
To show this, note that
\[\frac{\partial}{\partial\omega}\left(\log\zeta_+(s,\omega)\right)\Big|_{\omega=0}=\sum_{n=1}^\infty\sum_{a\in\N^n}\frac{\per(a)}{n}f([\overline{a}])e^{-s\ell(a)}\]
is the Laplace transform of the function
\[\bar{\bar{S}}_f(T)=\sum_{n=1}^\infty\sum_{\substack{a\in\N^n\\\ell(a)\leq T}}\frac{\per(a)}{n}f([\overline{a}]),\]
and has a simple pole at $s=1$, though is otherwise analytic in an open neighborhood of $\{s\in\C:\Rl(s)\geq1\}$. So by the Wiener-Ikehara Tauberian theorem, $\bar{\bar{S}}_f(T)\ll e^T$. Note also that
\[\sum_{\omega\in Q_{G,\text{odd}}(T)}f(\omega)\leq\bar{\bar{S}}\left(\frac{T}{2}\right),\]
since if $\omega=[\overline{a_1,\ldots,a_n}]$, where $n$ is odd and the minimal period length in the continued fraction expansion of $\omega$, then $\ell(\omega)=2\ell(a_1,\ldots,a_n)$ so that the inequalities $\ell(\omega)\leq T$ and $\ell(a_1,\ldots,a_n)\leq T/2$ are equivalent. We thus have
\[\sum_{\omega\in Q_{G,\text{odd}}(T)}f(\omega)\ll e^{T/2}.\]
(One can likely adapt the work of Kelmer \cite[Theorem 3]{Ke} to prove a more precise estimate.) This imples that
\[S_f(T)\sim\bar{S}_f(T)\sim\left(\frac{3\log2}{\pi^2}\int_0^1f\,d\nu\right)e^T,\]
and therefore the proof of Theorem \ref{T1} is complete.

\begin{acknowledgements}
I thank my advisor Florin Boca as well as Claire Merriman for many helpful discussions which inspired this paper. I particularly thank Merriman for making me aware of the characterization of the periodic points of $F$. I also thank the referee for helpful suggestions that improved the presentation of this paper. I also acknowledge support from Department of Education Grant P200A090062, ``University of Illinois GAANN Mathematics Fellowship Project.''
\end{acknowledgements}

\pagebreak

\appendix


\section{Counting periodic points of the Farey map through a number theoretical method}

The aim of this appendix is to provide a precise estimate of the cardinality of the set
\begin{equation*}
Q_{\tilde{F}} (x,y;T) =\{ (\omega,\tilde{\omega})\in Q_{\tilde{F}} (T): \omega \geq x, \tilde{\omega} \geq y\}
\end{equation*}
of periodic points of the natural extension $\tilde{F}$ of the Farey map, of length $\ell(\omega) \leq T$ and
lying in a region $[x,1]\times [y,1]$. The proof relies on arguments from \cite{KOPS,Bo,Ust}.

\begin{theorem}\label{T1appendix}
For every $x,y\in [0,1]$, $(x,y)\neq (0,0)$, and every $\epsilon >0$ we have
\begin{equation*}
\# Q_{\tilde{F}} (x,y;T) = e^T \log \left( \frac{1}{x+y-xy}\right) +O_\epsilon
\left( \left(\frac{e^{T/2}}{x+y}\right)^{3/2+\epsilon}\right).
\end{equation*}
\end{theorem}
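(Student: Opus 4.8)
The plan is to reduce the count $\#Q_{\tilde{F}}(x,y;T)$ to a lattice-point problem about products of the two parabolic generators $L=\left(\begin{smallmatrix}1&0\\1&1\end{smallmatrix}\right)$ and $R=\left(\begin{smallmatrix}1&1\\0&1\end{smallmatrix}\right)$, and then apply the counting estimates of \cite{KOPS,Bo,Ust}. First I would use Proposition \ref{P1}(ii): every periodic point of $\tilde{F}$ in $[x,1]\times[y,1]$ with length $\leq T$ arises, via the correspondence $\omega\leftrightarrow(\omega,\tilde\omega)$, from a reduced quadratic irrational $[\overline{a_1,\ldots,a_n}]$ together with a shift parameter $k\in\{0,\ldots,a_1-1\}$, and $\ell$ is constant along the $\tilde F$-orbit. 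Passing to matrices via \eqref{CFmatrices}, each periodic orbit of $\tilde F$ corresponds to a primitive hyperbolic conjugacy class in $\PSL_2(\Z)$ represented by a positive word $W=R^{a_1}L^{a_2}R^{a_3}\cdots$ (with an even number of syllables, after possibly doubling an odd period), and the length satisfies $e^{\ell/2}\asymp \Tr W$ up to controlled error — more precisely $e^{\ell(a)}$ equals the square of the larger eigenvalue. The conditions $\omega\geq x$ and $\tilde\omega\geq y$ translate, through the identification of $(\omega,\tilde\omega)$ with the endpoints $(\beta,\alpha)$ of the associated geodesic as in Section \ref{ssec:Fsection}, into the requirement that the fixed points of $W$ on $\partial\Hp$ lie in prescribed intervals determined by $x$ and $y$; equivalently, into linear inequalities on the entries of $W$.

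Next I would recognize that summing over the shift parameter $k$ is exactly what converts a count over $Q_G$ into a count over $Q_F$: as noted in the body, $\#Q_F(T)=\sum_{\omega\in Q_G(T)}\lfloor\omega^{-1}\rfloor$, and with the region constraints this becomes a weighted sum over matrices. The cleanest route is to express $\#Q_{\tilde F}(x,y;T)$ directly as a sum over all finite positive words in $L,R$ (not just "even" ones) whose trace is at most $e^{T/2}$ and whose attracting/repelling fixed points satisfy the region conditions, with each word counted with multiplicity given by its total syllable content — this is precisely the kind of sum $\sum_{\|M\|\leq N} 1$ over products of $\left(\begin{smallmatrix}1&1\\0&1\end{smallmatrix}\right)$ and $\left(\begin{smallmatrix}1&0\\1&1\end{smallmatrix}\right)$ with bounded trace that Kallies et al.\ analyzed, refined by Boca and Ustinov. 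Concretely, after the reduction, $\#Q_{\tilde F}(x,y;T)$ becomes (up to lower-order terms) a count of pairs of coprime integers, or of Stern–Brocot/Farey fractions in a box, with denominators bounded by $e^{T/2}$ and lying in the region cut out by $x,y$; the area of that region is $\log\frac{1}{x+y-xy}$, which matches $\int\int_{[x,1]\times[y,1]} d\tilde\mu$ via the computation $d\tilde\mu=\frac{dx\,dy}{(x+y-xy)^2}$ done at the end of Section \ref{Sec3}. The main term $e^T\log\frac{1}{x+y-xy}$ then comes out as the leading term of the lattice count, and the error term $O_\epsilon((e^{T/2}/(x+y))^{3/2+\epsilon})$ is inherited from Ustinov's refinement of the error in \eqref{QFgrowth}, tracked uniformly in the region parameters.

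The genuinely delicate steps are two. First, the bookkeeping between "$\ell(\omega)=\log((F^n)'(\omega))$" for even periods versus "$\ell(\omega)=2\log((F^n)'(\omega))$" for odd periods (flagged in the Remark after Theorem \ref{T1}) must be handled so that the trace bound $\Tr W\leq e^{T/2}$ is the correct cutoff; the odd-period classes contribute a term of smaller order $O(e^{T/4})$ type, which is absorbed into the stated error, but one has to argue this carefully rather than wave it away. Second, and this is the real obstacle, the error term must be uniform in $x$ and $y$ down to $(x,y)$ near $(0,0)$ — the factor $(x+y)^{-3/2-\epsilon}$ shows the estimate degrades near the parabolic cusp, and obtaining exactly this dependence requires re-running the Kallies–Boca–Ustinov arguments with the region inequalities built in from the start, rather than applying their theorems as a black box. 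This means revisiting the exponential-sum / Kloosterman-sum input in Ustinov's proof and verifying that the savings survive when one restricts the summation to the sub-box determined by $x,y$; the relevant Farey-fraction counting in a box of area $\asymp \log\frac{1}{x+y-xy}$ with denominators up to $e^{T/2}$ has error controlled by the same character-sum estimates, and propagating the $(x+y)$-dependence through them is the crux.

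Finally, I would assemble the pieces: fix $x,y$ with $(x,y)\neq(0,0)$ and $\epsilon>0$, write $\#Q_{\tilde F}(x,y;T)$ as the region-restricted, syllable-weighted word count, apply the refined lattice-point asymptotic with the box of Euclidean content $\log\frac{1}{x+y-xy}$ and denominator bound $e^{T/2}$, separate off the odd-period contribution as an error of order $O(e^{T/4}\cdot\text{(region factor)})$, and collect everything into
\[
\#Q_{\tilde F}(x,y;T)=e^T\log\!\left(\frac{1}{x+y-xy}\right)+O_\epsilon\!\left(\left(\frac{e^{T/2}}{x+y}\right)^{3/2+\epsilon}\right),
\]
which is the claimed formula. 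The consistency check is that integrating the "density" against $d\tilde\mu$ reproduces $\tilde\mu([x,1]\times[y,1])=\log\frac{1}{x+y-xy}$, exactly matching the unweighted equidistribution picture of Corollary \ref{C1}.
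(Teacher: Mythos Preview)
Your general instinct---reduce to counting positive words in the generators with bounded trace and entry constraints, then invoke Kloosterman-sum estimates as in \cite{Ust}---matches the paper's strategy. But the concrete decomposition you propose does not, and this is where the plan breaks down.

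The paper does \emph{not} weight words by syllable content. It sets up an unweighted bijection between periodic points $(\omega,\tilde\omega)\in Q_{\tilde F}$ and matrix words, split by the parity of the number of syllables. Even products $B^{a_1}A^{a_2}\cdots A^{a_{2m}}$ correspond to the points with $\omega=[\overline{a_1,\ldots,a_{2m}}]\in Q_G$ (for these $\tilde\omega=[1,\overline{a_{2m},\ldots,a_1}]\geq 1/2$ automatically), while odd products $B^{a_1}A^{a_2}\cdots B^{a_{2m+1}}$ correspond to the points with $\omega=[a_1,\overline{a_2,\ldots,a_{2m},a_1+a_{2m+1}}]\notin Q_G$ (and $\tilde\omega<1/2$). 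The region conditions $\omega\geq x$, $\tilde\omega\geq y$ become entry-ratio inequalities on the matrix; for odd products this requires the change of variable $\bar{\bar\omega}=\tilde\omega/(1-\tilde\omega)$ and the approximations \eqref{oddapprox1}--\eqref{oddapprox2}, which are new to the appendix.

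The point you miss is that the odd-product contribution is a \emph{main term}, not a lower-order correction. When $y<1/2$, both even and odd products contribute at order $e^T$, and their main terms combine: the even part gives $\frac{N^2}{2\zeta(2)}\log\frac{2}{1+x}$ via Ustinov's \eqref{Psiev}, while the odd part gives $\frac{N^2}{2\zeta(2)}\log\frac{1+x}{2(x+y-xy)}$ via a \emph{new} lemma (Lemma~\ref{Lemappendix}) that the paper proves---the analogue of \eqref{Psiev} for odd products with the region constraints built in. This lemma is where the $(x+y)^{-3/2-\epsilon}$ dependence in the error actually originates, and it is established by writing the count as $\sum_{q'\leq N/(\alpha+\beta)}$ of lattice points satisfying $p'q\equiv 1\pmod{q'}$ in an explicit polygon $\Omega_{q'}(N,\alpha,\beta)$, applying the Weil-bound estimate from \cite[Lemma~2]{Ust}, and summing the areas. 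Your $O(e^{T/4})$ remark concerns a different issue entirely---the distinction between odd and even \emph{continued-fraction periods} within $Q_G$, addressed in Section~\ref{ssec:equivalent}---which has nothing to do with the even/odd word-length split that drives the appendix proof.

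So the missing ingredient is: recognize the bijection between $\{\omega\in Q_F\setminus Q_G\}$ and odd products, formulate and prove the odd-product analogue of Ustinov's count (this is the ``re-running'' you allude to, but it must be done explicitly), and then combine the two main terms to obtain $\log\frac{1}{x+y-xy}$.
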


\begin{remark}
Since
\begin{equation*}
\log \left( \frac{1}{x+y-xy}\right) = \int_x^1 \int_y^1 \frac{du\, dv}{(u+v-uv)^2},
\end{equation*}
Theorem \ref{T1appendix} shows that the unweighted periodic points of $\tilde{F}$ are equidistributed with respect to the $\tilde{F}$-invariant
measure $\tilde{\mu}$. The effective estimate also allows $x$ and $y$ to converge to $0$ in a controlled way as $T\rightarrow \infty$.
\end{remark}

\begin{corollary}
The periodic points of the Farey map are equidistributed with respect to the $F$-invariant measure $\mu$, that is,
for every $x\in (0,1]$,
\begin{equation*}
\# \{ \omega \in Q_F(T): \ell(\omega) \leq T,\omega \geq x\}  =e^T
\log \bigg( \frac{1}{x}\bigg) +O_\epsilon
\bigg( \Big( \frac{e^{T/2}}{x}\Big)^{3/2+\epsilon} \bigg).
\end{equation*}
\end{corollary}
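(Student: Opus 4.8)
The plan is to obtain this corollary as the degenerate case $y=0$ of Theorem~\ref{T1appendix}, so essentially all of the analytic work has already been done. First I would invoke the natural correspondence $\omega\leftrightarrow(\omega,\tilde\omega)$ between the nonzero periodic points of $F$ and those of $\tilde F$ recorded in Section~\ref{ssec:Fmap}: this is a bijection of $Q_F$ onto the set of nonzero periodic points of $\tilde F$, and since $Q_F(T)$ and $Q_{\tilde F}(T)$ are both cut out by the same condition $\ell(\omega)\le T$ on the first coordinate (and, by the chain of equalities at the end of Section~\ref{ssec:Fsection}, one moreover has $\ell(\tilde\omega)=\ell(\omega)$), the bijection restricts to a bijection of $Q_F(T)$ onto $Q_{\tilde F}(T)$ that also preserves the first coordinate. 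Hence for every $T>0$ and every $x\in(0,1]$,
\[
\#\{\omega\in Q_F(T):\omega\ge x\}=\#\{(\omega,\tilde\omega)\in Q_{\tilde F}(T):\omega\ge x\}.
\]

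Next I would note that, by Proposition~\ref{P1}(ii), every nonzero periodic point of $\tilde F$ has second coordinate of the form $[1+k,\overline{a_n,\dots,a_1}]$, which lies in $(0,1]$; in particular the constraint $\tilde\omega\ge 0$ is vacuous, so the right-hand side above is exactly $\#Q_{\tilde F}(x,0;T)$. Since $x\in(0,1]$ we have $(x,0)\neq(0,0)$, so Theorem~\ref{T1appendix} applies to the pair $(x,0)$; substituting $y=0$ replaces $x+y-xy$ by $x$ and $x+y$ by $x$, which gives
\[
\#\{\omega\in Q_F(T):\omega\ge x\}=e^T\log\Big(\tfrac1x\Big)+O_\epsilon\Big(\big(e^{T/2}/x\big)^{3/2+\epsilon}\Big),
\]
as claimed.

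There is no genuine obstacle here; the only points needing care are that the boundary value $y=0$ lies in the admissible range of Theorem~\ref{T1appendix} --- which it does, the theorem being stated for all $(x,y)\in[0,1]^2$ with $(x,y)\neq(0,0)$ --- and that the $F$--$\tilde F$ correspondence respects both the length truncation and the condition on the first coordinate, which is immediate from the definitions together with the symmetry/length computations of Section~\ref{ssec:Fsection}. So the write-up is a short deduction rather than a new argument.
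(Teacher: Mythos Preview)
Your proposal is correct and is exactly the argument the paper has in mind: the corollary is stated without proof precisely because it is the $y=0$ specialization of Theorem~\ref{T1appendix}, combined with the bijection $\omega\leftrightarrow(\omega,\tilde\omega)$ between $Q_F(T)$ and $Q_{\tilde F}(T)$. Your remarks that $(x,0)\neq(0,0)$ is admissible and that the constraint $\tilde\omega\ge 0$ is vacuous are the only points needing mention, and you have them.
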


It is well known that products of positive powers of the matrices $A=\left( \begin{smallmatrix} 1 & 0 \\ 1 & 1 \end{smallmatrix}\right)$ and $B=\left( \begin{smallmatrix} 1 & 1 \\ 0 & 1 \end{smallmatrix} \right)$ satisfy
\begin{equation}\label{mproducts}
B^{a_1} A^{a_2} \cdots B^{a_{2m-1}} A^{a_{2m}}=\left( \begin{matrix} q_{2m} & q_{2m-1} \\
p_{2m} & p_{2m-1} \end{matrix}\right),\quad
B^{a_1}A^{a_2} \cdots A^{a_{2m}} B^{a_{2m+1}}=\left( \begin{matrix} q_{2m} & q_{2m+1} \\
p_{2m} & p_{2m+1} \end{matrix} \right),
\end{equation}
where $p_k=p_k(a_1,\dots,a_{2m+1})$ and $q_k=q_k(a_1,\ldots,a_{2m+1})$ are defined as in Section \ref{ssec:cfracs}. We refer to the products of the form on the left as even products, and the right as odd products. We associate the even product with the element $\omega=[\overline{a_1,\ldots,a_{2m}}]\in Q_G$. The fractions $p_{2m}/q_{2m}$ and $q_{2m-1}/q_{2m}$ provide good approximations for $\omega$ and respectively $-\bar{\omega}^{-1}$ as follows:
\begin{equation}\label{CFapprox}
\left|\omega-\frac{p_{2m}}{q_{2m}}\right|<\frac{1}{q_{2m}^2},\quad\left|(-\bar{\omega}^{-1})-\frac{q_{2m-1}}{q_{2m}}\right|<\frac{1}{q_{2m}^2}.
\end{equation}
(These inequalities follow easily from properties \eqref{CFprop1} and \eqref{CFinvert}.) Furthermore, the trace $q_{2m}+p_{2m-1}$ of the even matrix product is very close to $e^{\ell(a_1,\ldots,a_{2m})/2}$. Using these properties, Ustinov \cite{Ust} was able to establish an effective equidistribution result for the periodic points of $\tilde{G}$ according to $\tilde{\nu}$ by counting the number of even products with entries satisfying certain conditions. In particular, if we let $\Psi_{\text{ev}}(\alpha,\beta;N)$ be the set of even products in \eqref{mproducts} such that $p_{2m}/q_{2m}\leq\alpha$, $q_{2m-1}/q_{2m}\leq\beta$, and $q_{2m}+p_{2m-1}\leq N$, which is the set
\[\left\{ \left(\begin{matrix} q^\prime & q \\ p^\prime & p \end{matrix} \right) \in \SL_2(\Z) :
0\leq p\leq q,\ 0\leq p'\leq q',\  \frac{p^\prime}{q^\prime} \leq \alpha,\  \frac{q}{q^\prime} \leq \beta,\
 p+q^\prime \leq N \right\},\]
then it follows from \cite[Theorem 1]{Ust} that
\begin{equation}\label{Psiev}
\Psi_{\text{ev}}(\alpha,\beta;N)=\frac{\log(1+\alpha\beta)}{2\zeta(2)}N^2+O_\epsilon(N^{3/2+\epsilon}),
\end{equation}
where the error term is independent of $\alpha$ and $\beta$. Then if $N=e^{T/2}$, $\Psi_{\text{ev}}(\alpha,\beta;N)$ approximates the number of periodic points $(\omega,-\bar{\omega}^{-1})$ of $\tilde{G}$ such that $\omega\leq\alpha$, $-\bar{\omega}^{-1}\leq\beta$, and $\ell(\omega)\leq T$, to within the error term of $O_\epsilon(N^{3/2+\epsilon})$. This error term takes into account imprecisions due to the small discrepancy between the length of a given point in $Q_G$ and the trace of its corresponding matrix product, the approximations \eqref{CFapprox} which can be dealt with by considering only products with $q_{2m}\geq\sqrt{N}$ and including the remaining products in the error term (see \cite[pp.\ 777--778]{Ust}), and the overcounting of the periodic points in $\Psi_{\text{ev}}(\alpha,\beta;N)$ because of the allowance of products $B^{a_1}A^{a_2}\cdots B^{a_{2m}}$ where $(a_1,\ldots,a_{2m})$ has a minimal even period of less than $2m$ (see the process yielding \eqref{asymptotic1} above for how this is mitigated). Hence, we can use this result to estimate the number of periodic points $(\omega,\tilde{\omega})\in Q_{\tilde{F}}(x,y;T)$ with $\omega\in Q_G$.

To estimate the number of remaining periodic points of $Q_{\tilde{F}}(x,y;T)$, we establish their association, analogous to that discussed in the previous paragraph, with the odd products of $A$ and $B$. Specifically, we associate the odd product in \eqref{mproducts} with $\omega=[a_1,\overline{a_2,\ldots,a_{2m},a_1+a_{2m+1}}]\in Q_F$. Then letting $\bar{\bar{\omega}}:=\tilde{\omega}/(1-\tilde{\omega})=[a_{2m+1},\overline{a_{2m},\ldots,a_2,a_1+a_{2m+1}}]$, we have the following:
\begin{align}
\left|\omega-\frac{p_{2m+1}}{q_{2m+1}}\right|&=\left|[a_1,\ldots,a_{2m+1}+\omega^{-1}]-\frac{p_{2m+1}}{q_{2m+1}}\right|=\left|\frac{p_{2m+1}+\omega^{-1}p_{2m}}{q_{2m+1}+\omega^{-1}q_{2m}}-\frac{p_{2m+1}}{q_{2m+1}}\right|\nonumber\\
&=\frac{1}{q_{2m+1}^2(\omega+q_{2m}/q_{2m+1})}\leq\frac{1}{q_{2m+1}^2(\omega+\bar{\bar{\omega}})},\label{oddapprox1}\\
\left|\bar{\bar{\omega}}-\frac{q_{2m}}{q_{2m+1}}\right|&=\left|[a_{2m+1},\ldots,a_{1}+\bar{\bar{\omega}}^{-1}]-\frac{q_{2m}}{q_{2m+1}}\right|\nonumber\\
&=\left|\frac{q_{2m}+\bar{\bar{\omega}}^{-1}q_{2,2m}}{q_{2m+1}+\bar{\bar{\omega}}^{-1}q_{2,2m+1}}-\frac{q_{2m}}{q_{2m+1}}\right|=\frac{1}{q_{2m+1}^2(\bar{\bar{\omega}}+q_{2,2m+1}/q_{2m+1})}\nonumber\\
&=\frac{1}{q_{2m+1}^2(\bar{\bar{\omega}}+p_{2m+1}/q_{2m+1})}\leq\frac{1}{q_{2m+1}^2(\omega+\bar{\bar{\omega}})}.\label{oddapprox2}
\end{align}
The second bound follows from the equalities
\[\frac{p_{2m+1}(a_{2m+1},\ldots,a_1)}{q_{2m+1}(a_{2m+1},\ldots,a_1)}=\frac{q_{2m}(a_1,\ldots,a_{2m+1})}{q_{2m+1}(a_1,\ldots,a_{2m+1})},\quad
\frac{p_{2m}(a_{2m+1},\ldots,a_1)}{q_{2m}(a_{2m+1},\ldots,a_1)}=\frac{q_{2,2m}(a_1,\ldots,a_{2m+1})}{q_{2,2m+1}(a_1,\ldots,a_{2m+1})},\]
\[p_{2m+1}(a_1,\ldots,a_{2m+1})=q_{2,2m+1}(a_1,\ldots,a_{2m+1}),\]
all of which can be seen from \eqref{CFmatrices}. Also, the trace $q_{2m}+p_{2m+1}$ of the odd product in \eqref{mproducts} is very close to $e^{\ell(a_1+a_{2m+1},a_2,\ldots,a_{2m})/2}$. One can therefore estimate the number of points $(\omega,\tilde{\omega})\in Q_{\tilde{F}}(x,y;T)$ with $\omega\notin Q_G$ by the following lemma.

\begin{lemma}\label{Lemappendix}
For every $\alpha,\beta\in[0,1]$, $(\alpha,\beta)\neq(0,0)$, let $S_N(\alpha,\beta)$ be the cardinality of the set of odd products in \eqref{mproducts} satisfying the inequalities $p_{2m+1}/q_{2m+1}\geq\alpha$, $q_{2m}/q_{2m+1}\geq\beta$, and $q_{2m}+p_{2m+1}\leq N$, which is the set
\begin{equation*}
\left\{ \left( \begin{matrix} q & q^\prime \\ p & p^\prime \end{matrix}\right) \in \SL_2(\Z) :
0\leq p\leq q,\  \alpha q^\prime \leq p^\prime \leq q^\prime,\
\beta q^\prime \leq q\leq q^\prime,\
 p^\prime +q \leq N \right\}.
\end{equation*}
Then for $\epsilon>0$, we have
\begin{equation*}
S_N(\alpha,\beta)= \frac{N^2}{2\zeta(2)}
\log \left( \frac{(1+\alpha)(1+\beta)}{2(\alpha+\beta)} \right) + O_\epsilon \bigg( \Big( \frac{N}{\alpha+\beta}\Big)^{3/2+\epsilon}\bigg) .
\end{equation*}
\end{lemma}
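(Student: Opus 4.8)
The plan is to prove Lemma~\ref{Lemappendix} by adapting to the odd products the lattice--point counting argument Ustinov~\cite{Ust} uses for the even products in~\eqref{Psiev}: only the arrangement of the linear inequalities is different (two of the ratio conditions are reversed), so the analytic machinery of~\cite{KOPS,Bo,Ust} carries over once the geometry is reorganized. Concretely, $S_N(\alpha,\beta)$ counts $M=\left(\begin{smallmatrix}q&q'\\p&p'\end{smallmatrix}\right)\in\SL_2(\Z)$ with $qp'-q'p=1$ subject to $0\le p\le q$, $\alpha q'\le p'\le q'$, $\beta q'\le q\le q'$, $p'+q\le N$. Fixing $q'=s$, for each $q$ with $\beta s\le q\le s$ and $\gcd(q,s)=1$ the entry $p'$ is forced into the single residue class $p'\equiv q^{-1}\pmod s$ (and then $p=(qp'-1)/q'$ is determined and automatically obeys $0\le p\le q$), so the count of admissible $M$ with $q'=s$ equals
\[
\sum_{\substack{\beta s\le q\le s\\ \gcd(q,s)=1}}\#\bigl\{\,p'\equiv q^{-1}\pmod s:\ \alpha s\le p'\le\min(s,N-q)\,\bigr\}.
\]
Each inner cardinality is $0$ or $1$, so this is a question about how often the normalized modular inverse of $q$ lands in a prescribed interval.

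For the main term I would replace each inner count by the ``expected'' value $\tfrac1s\max(0,\min(s,N-q)-\alpha s)$, then replace the sum over $q$ coprime to $s$ by $\tfrac{\varphi(s)}{s}$ times the corresponding integral, and finally replace $\tfrac{\varphi(s)}{s}$ in the remaining sum over $s$ by its mean value $\zeta(2)^{-1}$, using $\sum_{s\le x}\varphi(s)=\tfrac{x^2}{2\zeta(2)}+O(x\log x)$ and Abel summation. After the substitutions $q=s\tau$ and $s=N/R$ the main term becomes $\tfrac{N^2}{\zeta(2)}\int_0^\infty R^{-3}\Phi(R)\,dR$, where $\Phi(R)=\int_\beta^1\max(0,\min(1,R-\tau)-\alpha)\,d\tau$; interchanging the order of integration and using $\int_0^\infty R^{-3}\max(0,\min(1,R-\tau)-\alpha)\,dR=\tfrac12(\tfrac1{\alpha+\tau}-\tfrac1{1+\tau})$ gives $\int_0^\infty R^{-3}\Phi(R)\,dR=\tfrac12\log\tfrac{(1+\alpha)(1+\beta)}{2(\alpha+\beta)}$, which is precisely the asserted main term. (The integral diverges exactly when $\alpha+\beta=0$, matching the hypothesis; and it is the $\tilde\mu$--measure of the region $\{\omega\ge\alpha,\ \bar{\bar{\omega}}\ge\beta\}$, consistent with the approximations~\eqref{oddapprox1}--\eqref{oddapprox2} and with the trace of an odd product being close to $e^{\ell/2}$.)

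The main obstacle is the error term. The crude ``$+O(1)$ per progression'' estimate loses a whole factor and only yields $O(N^2)$, of the same order as the main term, so one must extract cancellation from the three error contributions — the fluctuations in counting progression points, the error in the coprimality restriction, and the error in replacing $\varphi$ by its mean — each of which is a sum over $q$ (or over $s$) governed by incomplete Kloosterman sums of the type treated by Boca~\cite{Bo} and Kallies et al.~\cite{KOPS} and sharpened by Ustinov~\cite{Ust}. Reproducing those estimates for the odd region while tracking the dependence on $\alpha$ and $\beta$ is the technical heart of the proof; the factor $(\alpha+\beta)^{-3/2-\epsilon}$ arises because once $\alpha+\beta$ is small the constraints $\beta s\le q\le s$, $p'\le N-q$ permit $s$ as large as $\asymp N/(\alpha+\beta)$, so the region is long and thin (mirroring the $(\omega+\bar{\bar{\omega}})^{-1}$ factors in~\eqref{oddapprox1}--\eqref{oddapprox2}) and the discrepancy estimates must stay valid in that degenerate regime, exactly as in~\cite[pp.~777--778]{Ust} with the roles of the matrix entries permuted.
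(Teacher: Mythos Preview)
Your approach is essentially the same as the paper's: fix $q'$, count pairs $(p',q)$ with $p'q\equiv1\pmod{q'}$ lying in the region $\Omega_{q'}=([\alpha q',q']\times[\beta q',q'])\cap\{u+v\le N\}$, and sum over $q'\le N/(\alpha+\beta)$. The difference is organizational. Your error analysis is more elaborate than necessary: rather than splitting into three sources (progression fluctuations, coprimality, replacement of $\varphi$ by its mean) and tracking each, the paper invokes a single two-dimensional Weil--Kloosterman estimate (cf.\ \cite[Lemma~2]{Ust}) to obtain $S_{q'}(\alpha,\beta;N)=\frac{\varphi(q')}{q'^2}\operatorname{Area}(\Omega_{q'})+O_\epsilon(q'^{1/2+\epsilon})$ for each fixed $q'$, and summing that error over $q'\le N/(\alpha+\beta)$ already gives $O_\epsilon\big((N/(\alpha+\beta))^{3/2+\epsilon}\big)$ with no further work. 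For the main term the paper computes $\operatorname{Area}(\Omega_{q'})$ explicitly in four cases (according to where $q'$ sits relative to $N/2$, $N/(1+\alpha)$, $N/(1+\beta)$, $N/(\alpha+\beta)$) and sums each piece against $\varphi(q')/q'^2$ using the standard asymptotics for $\sum\varphi(n)$, $\sum\varphi(n)/n$, $\sum\varphi(n)/n^2$; your integral substitution $q=s\tau$, $s=N/R$ is a cleaner route to the same logarithm.
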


\begin{proof}
We closely follow the proof of \cite[Theorem 2]{Ust}. We can write
\begin{equation*}
S_N(\alpha,\beta)=\sum\limits_{q^\prime \leq \frac{N}{\alpha+\beta}} S_{q^\prime} (\alpha,\beta;N),
\end{equation*}
where
\begin{equation*}
S_{q^\prime} (\alpha,\beta;N) =\sum\limits_{\substack{(p^\prime ,q)\in [\alpha q^\prime,q^\prime]\times [\beta q^\prime,q^\prime] \\
p^\prime q\equiv 1 \hspace{-6pt} \pmod{q^\prime} ,\, p^\prime +q \leq N}} 1
\end{equation*}
can be expressed, employing a standard estimate relying on the Weil bound for Kloosterman sums
(cf. \cite[Lemma 2]{Ust}), as
\begin{equation*}
S_{q^\prime} (\alpha,\beta;N) =\frac{\varphi(q^\prime)}{q^{\prime 2}} \,
\operatorname{Area} (\Omega_{q^\prime} (N,\alpha,\beta)) +O_\epsilon (q^{\prime 1/2+\epsilon}) ,
\end{equation*}
with
\begin{equation*}
\Omega_{q^\prime} (N,\alpha,\beta) = ( [\alpha q^\prime ,q^\prime ] \times [\beta q^\prime ,q^\prime ])
\cap \{ (u,v): u+v \leq N\} .
\end{equation*}

Assuming without loss of generality that $0\leq\beta \leq \alpha\leq 1$, we plainly compute
\begin{equation*}
\operatorname{Area} (\Omega_{q^\prime} (N,\alpha,\beta))=\begin{cases}
(1-\alpha)(1-\beta) q^{\prime 2} & \mbox{\rm if $q^\prime \leq \frac{N}{2}$} \\
(1-\alpha)(1-\beta)q^{\prime 2} -\frac{1}{2} (2q^\prime -N)^2 & \mbox{\rm if $\frac{N}{2} \leq q^\prime \leq \frac{N}{1+\alpha}$}  \\
\frac{(1-\alpha)q^\prime}{2} (2N-(1+\alpha+2\beta)q^\prime)  &
\mbox{\rm if $\frac{N}{1+\alpha} \leq q^\prime \leq \frac{N}{1+\beta}$} \\
\frac{1}{2} (N-(\alpha+\beta)q^\prime)^2 & \mbox{\rm if $\frac{N}{1+\beta} \leq q^\prime \leq \frac{N}{\alpha+\beta}$} \\
0 & \mbox{\rm if $\frac{N}{\alpha+\beta}\leq q^\prime$.}
\end{cases}
\end{equation*}
Employing in the sequel the estimates
\begin{equation*}
\begin{split}
\sum_{n\leq X} \varphi (n) & =\frac{X^2}{2\zeta(2)} +O(X\log X) ,\qquad
\sum_{n\leq X} \frac{\varphi(n)}{n} =\frac{X}{\zeta(2)} +O(\log X) ,\\
\sum_{n\leq X} \frac{\varphi(n)}{n^2} & =\frac{\log X}{\zeta(2)} +C +O\bigg( \frac{\log X}{X}\bigg)
\quad \mbox{\rm with} \quad C=\frac{1}{\zeta(2)} \bigg( \gamma -\frac{\zeta^\prime (2)}{\zeta(2)} \bigg),
\end{split}
\end{equation*}
where the first two are elementary, while the third one is proved using properties of the Riemann zeta function and Perron's formula
(see, e.g. \cite[Corollary 4]{Bo} or \cite[page 32]{Te}), we find that the main term contributions of terms of the form $\varphi(q^\prime)/q^\prime$
and $\varphi(q^\prime)$ to $S_N(\alpha,\beta)$ are both $\ll_\epsilon\big( \frac{N}{\alpha+\beta}\big)^{3/2+\epsilon}$, while the contribution
of terms of the form $\varphi(q^\prime)/q^{\prime 2}$ is given by the main term $\frac{N^2}{2\zeta(2)}
\log \big( \frac{(1+\alpha)(1+\beta)}{2(\alpha+\beta)} \big)$ and an error which is again $\ll_\epsilon\big(\frac{N}{\alpha+\beta}\big)^{3/2+\epsilon}$.
\end{proof}

Analogously to $\Psi_{\text{ev}}$, when $N=e^{T/2}$, $S_N(\alpha,\beta)$ estimates the number of periodic points in $(\omega,\tilde{\omega})$ of $\tilde{F}$ such that $\omega\geq\alpha$, $\bar{\bar{\omega}}\geq\beta$, and $\ell(\omega)\leq T$. Errors arise for the same reasons, and can be dealt with in essentially the same way, as the $\Psi_{\text{ev}}$ approximation. In particular, when analyzing the error due to the approximations \eqref{oddapprox1} and \eqref{oddapprox2}, one can focus attention on products where $q_{2m+1}\geq\big(\frac{N}{\alpha+\beta}\big)^{1/2}$ and include the remaining products in the error term, analogously to \cite[pp.\ 777--778]{Ust}. Additionally, if one chooses to shrink $\alpha+\beta$ to $0$ as $N\rar\infty$, one should impose a restriction $\alpha+\beta\gg N^{-1/3+\delta_\epsilon}$ ($\delta_\epsilon>8\epsilon/(9+6\epsilon)$) to obtain a main term, and also ensure that the bounds \eqref{oddapprox1} and \eqref{oddapprox2} are sufficiently small compared to $\alpha+\beta$. Under this condition, the overall error in the estimate is $\ll_\epsilon\big(\frac{N}{\alpha+\beta}\big)^{3/2+\epsilon}$.

We can now complete the proof of Theorem \ref{T1appendix}. Assume henceforth that $N=e^{T/2}$. We first consider the case where $y\geq1/2$. Then the only points $(\omega,\tilde{\omega})$ in
$Q_{\tilde{F}} (x,y;T)$ are such that $\omega\in Q_G$, for if $\omega\notin Q_G$, then $\tilde{\omega}<1/2$. So we simply have to estimate
$\# \{ (\omega,\tilde{\omega} ): \omega \in Q_G (T), \omega \geq x, \tilde{\omega} \geq y\}$.  For
$\omega=[\overline{a_1,\ldots,a_n}] \in Q_G(T)$, $\tilde{\omega}=[1,\overline{a_n,\ldots,a_1}] = (1-\bar{\omega}^{-1})^{-1}$, and thus $\tilde{\omega}\geq y$ is
equivalent to $-\bar{\omega}^{-1} \leq y^{-1}-1$. Employing \eqref{Psiev}, we can infer that
\begin{equation*}
\begin{split}
\#Q_{\tilde{F}} (x,y;T) & = \Psi_{\text{ev}}( 1,y^{-1}-1;N) -
\Psi_{\text{ev}} ( x, y^{-1}-1;N)+O_\epsilon(N^{3/2+\epsilon}) \\
& = \frac{N^2}{2\zeta(2)} \log \left( \frac{1+(y^{-1}-1)}{1+x(y^{-1}-1)} \right)
+O_\epsilon (N^{3/2+\epsilon})\\ &  =
\frac{N^2}{2\zeta(2)} \log \left( \frac{1}{x+y-xy}\right) +O_\epsilon (N^{3/2+\epsilon}).
\end{split}
\end{equation*}

Assume next that $y<1/2$. Then $y^{-1}-1>1$; so again employing \eqref{Psiev}, we see that the contribution to $\#Q_{\tilde{F}}(x,y;T)$ of the periodic points $(\omega,\tilde{\omega})$ with $\omega\in Q_G$ is
\begin{equation}
\Psi_{\text{ev}}(1,1;N)-\Psi_{\text{ev}}(x,1;N) +O(N^{3/2+\epsilon})
=\frac{N^2}{2\zeta(2)} \log \bigg( \frac{2}{1+x}\bigg) +O_\epsilon (N^{3/2+\epsilon}).\label{QGcontribution}
\end{equation}
On the other hand, since $\tilde{\omega}\geq y$ is equivalent to $\bar{\bar{\omega}}\geq y/(1-y)$, the contribution to $\#Q_{\tilde{F}}(x,y;T)$ of the points $(\omega,\tilde{\omega})$ with $\omega\notin Q_G$ is,
according to Lemma \ref{Lemappendix} with $\alpha=x$ and $\beta=y/(1-y)$,
\begin{align}\label{nonQGcontribution}
&S_N\left(x,\frac{y}{1-y}\right)+O_\epsilon\left(\left(\frac{N}{\alpha+\beta}\right)^{3/2+\epsilon}\right)
=\frac{N^2}{2\zeta(2)}
\log \left( \frac{(1+x)(1+\frac{y}{1-y})}{2(x+\frac{y}{1-y})} \right) + O_\epsilon \left( \left( \frac{N}{x+y}\right)^{3/2+\epsilon}\right)\nonumber\\
&\qquad\qquad\qquad\qquad\qquad\qquad\qquad
=\frac{N^2}{2\zeta(2)}
\log \left( \frac{(1+x)}{2(x+y-xy)} \right) + O_\epsilon \left( \left( \frac{N}{x+y}\right)^{3/2+\epsilon}\right).
\end{align}
The statement of Theorem \ref{T1appendix} follows by adding the quantities in \eqref{QGcontribution} and \eqref{nonQGcontribution}. Lastly, we note that $x+y$ can shrink to $0$ as $N\rar\infty$ as long as $x+y\gg N^{-1/3+\delta_\epsilon}$.

\end{document}